\newtheorem{Thm}{Theorem}[section]
\newtheorem{Lem}[Thm]{Lemma}
\newtheorem{Conj}[Thm]{Conjecture}
\theoremstyle{definition}
\newtheorem{Rem}[Thm]{Remark}
\begin{document}

\title[the sum of a prime and a square-full number in short intervals]{Asymptotic formula for \\ the sum of a prime and a square-full number \\ in short intervals shorter than $X^{1/2}$}
\author[F.Ogihara]{Fumi Ogihara}
\maketitle\thispagestyle{empty}

%-----------------------------------------------abstract-------------------------------------------------
\begin{abstract}
	Let $R(N)$ be the number of representations of $N$ as a sum of a prime and a square-full number weighted with logarithmic function. 
	In $2024$, the author and Y. Suzuki obtained an asymptotic formula for the sum of $R(N)$ over positive integers $N$ in a short interval ($X$, $X+H$] for $X^{\frac{1}{2}+\varepsilon} \le H < X^{1-\varepsilon}$. 
	In this article, we improve the range of $H$, that is,  we prove the same asymptotic formula for $X^{\frac{32-4\sqrt{15}}{49}+\varepsilon} \le H \le X^{1- \varepsilon}$. 
	
\end{abstract}

%---------------------------------------------------1----------------------------------------------------
\section{Introduction}
	The following is one of the well-known Hardy--Littlewood conjectures. 
	\begin{Conj} [Hardy--Littlewood~\cite{H-L}] \label{Conj_H-L}
		Every sufficiently large positive integer $N$ is a square or the sum of a prime and a square. In other words, 
	$$
		N = n^{2} \quad \mbox{or} \quad p + n^{2}, 
	$$
	where $p$ denotes a prime number and $n$ denotes a positive integer.
	\end{Conj}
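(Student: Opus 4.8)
The plan is to approach the conjecture by the Hardy--Littlewood circle method: realize the number of representations as an exponential-sum integral, extract a positive main term from the major arcs, and control the minor arcs. Write $e(x) = e^{2\pi i x}$, put $P = N^{1/2}$, and set
$$
f(\alpha) = \sum_{p \le N}(\log p)\,e(p\alpha), \qquad g(\alpha) = \sum_{1 \le n \le P} e(n^{2}\alpha),
$$
so that the logarithmically weighted number of solutions of $N = p + n^{2}$ is
$$
R(N) = \int_{0}^{1} f(\alpha)\,g(\alpha)\,e(-N\alpha)\,d\alpha .
$$
For a large $N$ that is not a perfect square it suffices to prove $R(N) > 0$; one expects in fact $R(N) = \tfrac12\,\mathcal{S}(N)\,N^{1/2} + o\bigl(N^{1/2}\bigr)$, where $\mathcal{S}(N)$ is the singular series of the problem. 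A preliminary computation with the local densities --- using only the non-vanishing at $s = 1$ of the quadratic Dirichlet $L$-function attached to the congruence $n^{2} \equiv N$ --- gives $\mathcal{S}(N) > 0$ for every non-square $N$ (conjecturally $\mathcal{S}(N) \asymp 1$, and effectively $\mathcal{S}(N) \gg (\log N)^{-1}$, which is all that the conjecture needs). One then fixes a cutoff $Q$, performs a Farey dissection of $[0,1)$ into major arcs $\mathcal{M}$ --- short intervals around the fractions $a/q$ with $q \le Q$ --- and a minor-arc complement, and on $\mathcal{M}$ replaces $f$ by its prime-number-theorem main term and $g$ by a Gauss sum times a complete exponential integral; summing over $q$ and integrating the local variable recovers $\tfrac12\,\mathcal{S}(N)\,N^{1/2}\bigl(1 + o(1)\bigr)$. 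With $Q$ a fixed power of $\log N$ this major-arc step is unconditional (by Siegel--Walfisz) and entirely routine.

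The whole difficulty is the minor arcs, and here the plan --- like every method presently known --- stalls. One would need
$$
\int_{[0,1)\setminus\mathcal{M}}\bigl|f(\alpha)\,g(\alpha)\bigr|\,d\alpha = o\bigl(N^{1/2}\bigr),
$$
but there is no room for such a bound in $L^{2}$: since $\int_{0}^{1}|f(\alpha)|^{2}\,d\alpha = \sum_{p\le N}(\log p)^{2} \asymp N\log N$ and $\int_{0}^{1}|g(\alpha)|^{2}\,d\alpha = \lfloor P\rfloor \asymp N^{1/2}$, Cauchy--Schwarz over the \emph{entire} circle already yields only
$$
\int_{0}^{1}\bigl|f(\alpha)\,g(\alpha)\bigr|\,d\alpha \ll \bigl(N\log N\bigr)^{1/2}\bigl(N^{1/2}\bigr)^{1/2} = N^{3/4}(\log N)^{1/2},
$$
which overshoots the expected main term $N^{1/2}$ by a factor $N^{1/4 + o(1)}$. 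So even if the minor-arc integral carried no cancellation at all it would dwarf the answer; one cannot win by estimating $|fg|$. Feeding in the standard pointwise minor-arc inputs --- Vinogradov's bound $f(\alpha) \ll \bigl(Nq^{-1/2} + N^{4/5} + (qN)^{1/2}\bigr)(\log N)^{A}$ for $|\alpha - a/q| \le q^{-2}$, $(a,q) = 1$, together with Weyl's inequality for the quadratic sum $g$ --- only redistributes mass within this $L^{2}$ budget, and no H\"older combination of the available bounds brings the minor-arc integral below $N^{3/4 + o(1)}$. In other words the problem is genuinely \emph{binary} --- one prime, one square --- so it is an asymptotics problem, not an estimation problem: closing it demands honest cancellation in the correlation $\sum_{n \le P}\Lambda(N - n^{2})$, i.e.\ a Type~II (bilinear) estimate for $\Lambda$ along the thin sequence $\{N - n^{2}\}_{n \le P}$, equivalently a distribution of primes in the progressions $N \bmod d$ reaching beyond the Bombieri--Vinogradov range. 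That input is not available; it is of the same depth as what is missing in the binary Goldbach and twin-prime problems.

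For comparison, adjoining one further variable makes the companion problem tractable: $N = p + x^{2} + y^{2}$ is solvable for every large $N$ (Hardy--Littlewood, with the asymptotic formula of Linnik), but already that rests on a Bombieri--Vinogradov-strength estimate --- Linnik's dispersion method --- for primes in $N \bmod d$ with $d$ up to $N^{1/2}$; the binary problem demands strictly more. On the sieve side one can prove, for all large non-square $N$, a representation $N = P_{k} + n^{2}$ with $k$ an absolute constant (a dimension-one lower-bound sieve applied to $\{N - n^{2}\}_{n \le P}$, whose level of distribution is a fixed power of $N$), and the circle method with an average over $N$ gives $N = p + n^{2}$ for \emph{almost all} $N$, with a power-saving exceptional set; but lowering $k$ to $1$, or removing the exceptional set, meets the parity phenomenon, which forbids any sieve from producing a lower bound of the expected order for primes in a sequence as thin as $\{N - n^{2}\}$. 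I therefore expect the main obstacle --- and, candidly, a wall that no currently available technique breaches --- to be exactly this minor-arc / parity barrier. The positive, unconditional content of the plan is the major-arc computation, which identifies the size $\tfrac12\,\mathcal{S}(N)\,N^{1/2}$ that $R(N)$ ought to have and pins the entire remaining difficulty on the minor arcs; a proof of the conjecture as stated would need a genuinely new idea there --- a bilinear or dispersion-type input of a strength not presently known --- short of which the statement remains, as it has since 1923, a conjecture.
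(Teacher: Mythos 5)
This statement is Conjecture~\ref{Conj_H-L}, which the paper records as an \emph{open conjecture} of Hardy and Littlewood and does not prove; the paper instead studies a short-interval average of a related representation function. Your submission is not a proof and does not claim to be one: you set up the circle-method framework, compute what the major-arc contribution ought to be, and then correctly locate the obstruction on the minor arcs (the $L^{2}$ budget of size $N^{3/4+o(1)}$ against a target main term of size $N^{1/2}$, the binary/parity barrier, the lack of a Type~II estimate for $\Lambda$ along $\{N-n^{2}\}$). That diagnosis is accurate and is essentially the standard account of why the conjecture remains open; recognizing that the statement cannot currently be proved, rather than fabricating an argument, is the right call here. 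One small caveat: your claim that $\mathcal{S}(N)\gg(\log N)^{-1}$ ``effectively'' for all non-square $N$ is stated more confidently than the literature supports unconditionally (ruling out $\mathcal{S}(N)=0$ for non-square $N$ is routine, but uniform effective lower bounds run into the usual Siegel-zero issue); since you are not claiming a proof this does not affect your conclusion.
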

	To approach to this conjecture, we consider the following function: 
	\[
		R_{\textup{HL}}(N) \coloneqq \sum_{p+n^{2}=N} \log p. 
	\]
	We call such a function a representation function.
	
	In \cite{H-L}, G. H. Hardy--J. E. Littlewood also conjectured the asymptotic formula: 
	\[
		R_{\textup{HL}}(N)
		\sim
		\mathfrak{S}(N)\sqrt{N}, 
		\quad 
		\textup{($N$: not square)}, 
	\]
	where
	\[
		\mathfrak{S} (N) \coloneqq \prod_{p > 2} \bigg( 1 - \frac{(N/p)}{p-1} \bigg), 
		\quad 
		(N/p)\textup{: Legendre symbol}. 
	\]
	
	In 2017, A. Languasco and A. Zaccanini considered the short interval average of the representation function: 
	\begin{equation*} 
		\sum_{X < N \le X+H} R_{\textup{HL}}(N)
	\end{equation*}
	for real numbers $X$, $H$ with $ 4 \le H \le X$. 
	Let 
	\[
		C \coloneqq \exp \bigg( -c \bigg(\frac{\log X}{\log \log X} \bigg)^{\frac{1}{3}} \bigg), 
	\]
	where $c$ is some small positive constant which may depend on $\varepsilon$.

	\begin{Thm}[{Languasco--Zaccagnini~\cite{L-Z}}] \label{Thm_L-Z}
		For real numbers $X$, $H$ with $4 \le H \le X$ and any real number $\varepsilon >0$, we have
		\begin{equation} \label{L-Zresult}
			\sum_{X < N \le X+H} R_{\textup{HL}}(N)= HX^{\frac{1}{2}}+O( HX^{\frac{1}{2}}C^{-1})
		\end{equation}
		provided 
		\begin{equation*} 
			X^{\frac{1}{2}}C^{-1} \le H \le X^{1- \varepsilon},
		\end{equation*}
		where the implicit constant depend only on $\varepsilon$.
	\end{Thm}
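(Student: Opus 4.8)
The plan is to apply the circle method, using the Fourier dual of the short interval to localise. Set $S(\alpha)=\sum_{n\le X+H}\Lambda(n)e(n\alpha)$, $V(\alpha)=\sum_{n\le(X+H)^{1/2}}e(-n^{2}\alpha)$ and $U(\alpha)=\sum_{X<N\le X+H}e(N\alpha)$, so that by orthogonality
\[
  \sum_{X<N\le X+H}R_{\textup{HL}}(N)=\int_{-1/2}^{1/2}S(\alpha)V(\alpha)U(\alpha)\,d\alpha+O\big(H\log X+X^{2/3}\big),
\]
the error absorbing the prime powers (the difference between $\log p$ and $\Lambda$) and the squares $N=m^{2}$, both of which are $o(HX^{1/2})$ once $H\ge X^{1/2}$. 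Throughout I will freely use $|U(\alpha)|\ll\min(H,\|\alpha\|^{-1})$, $\int_{0}^{1}|U|^{2}=H$ and $\int_{0}^{1}|S|^{2}\ll X\log X$. Next I fix $Q=C^{-A}$ with $A$ a suitable absolute constant (so $Q$ is $X^{o(1)}$ but exceeds every power of $\log X$), and split the integral into the major arcs $\mathfrak{M}=\bigcup_{q\le Q}\bigcup_{(a,q)=1}\{\,|\alpha-a/q|\le Q/X\,\}$ and the complementary minor arcs $\mathfrak{m}$, on which Dirichlet's theorem furnishes an approximation with $Q<q\le(X+H)^{1/2}$.

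On $\mathfrak{M}$ I would insert the classical approximations $S(a/q+\beta)=\frac{\mu(q)}{\phi(q)}\sum_{n\le X+H}e(n\beta)+E_{q}(\beta)$, where $E_{q}(\beta)\ll XC$ comes from the explicit formula together with the Vinogradov--Korobov zero-free region for $\zeta$ and the Dirichlet $L$-functions (and a zero-density estimate to carry the full range $q\le Q$), and $V(a/q+\beta)=\frac{G(q,-a)}{q}\,v(\beta)+O\big(q^{1/2+\varepsilon}\big)$, with $G(q,a)$ the quadratic Gauss sum and $v(\beta)=\frac12\int_{0}^{X+H}u^{-1/2}e(-u\beta)\,du\ll\min\big(X^{1/2},|\beta|^{-1/2}\big)$. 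Multiplying out, the sums over $a$ build the truncated singular series, completed to $\mathfrak{S}(N)$ at the cost of the tail $\sum_{q>Q}$, while the $\beta$-integral (extended to the whole line) of $v(\beta)\big(\sum_{n}e(n\beta)\big)U(\beta)e(-N\beta)$ evaluates to $N^{1/2}$ up to lower-order terms. Summing over $N\in(X,X+H]$ and invoking the mean value $\sum_{X<N\le X+H}\mathfrak{S}(N)\sim H$ then delivers the main term $HX^{1/2}$ with an error of the quality asserted in \eqref{L-Zresult}, the saving being exactly the $C$ inherited from the zero-free region. It is here that the restriction $H\ge X^{1/2}$ is forced: $U$ confines $\beta$ essentially to $|\beta|\ll1/H$, whereas the approximation $V\approx(G/q)v$ is only serviceable for $|\beta|\ll(qX^{1/2})^{-1}$, which at $q=1$ demands $H\gg X^{1/2}$.

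The minor arcs are the heart of the matter. I would begin with Cauchy--Schwarz in the form $\int_{\mathfrak{m}}|SVU|\le\big(\int_{\mathfrak{m}}|S|^{2}|U|\big)^{1/2}\big(\int_{\mathfrak{m}}|V|^{2}|U|\big)^{1/2}$, organised dyadically by the Dirichlet denominator $q$. For the $V$-factor one uses that, even on $\mathfrak{m}$, $V$ is governed near $a/q$ by $(G(q,-a)/q)v(\beta)$, so $|V(a/q+\beta)|\ll X^{1/2}q^{-1/2}(1+X|\beta|)^{-1/2}$; the crucial point is that the mass of $|V|^{2}$ sits near rationals of small $q$, precisely where the short-interval sum $U$ --- which peaks only at $q=1$ --- is small, a dispersion quantified by $\sum_{a\bmod q}|U(a/q+\beta)|^{2}\ll H^{2}+qH$. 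For the $S$-factor one needs a hybrid mean-value estimate of the shape $\sum_{a\bmod q}\int_{|\beta|\le\delta}|S(a/q+\beta)|^{2}\,d\beta\ll(X\delta+q)(\log X)^{O(1)}$, itself a consequence of the explicit formula and the zero-free region. Feeding these in and summing over $q$ bounds the minor-arc contribution by $HX^{1/2}$ times a factor that is $o(1)$ precisely when $H\ge X^{1/2}C^{-1}$, so that \eqref{L-Zresult} follows. The delicate point --- and the place where improving on what is sketched here, to lower the exponent below $1/2$, requires genuinely new input --- is the range of $q$ near $(X+H)^{1/2}$: there the quadratic Weyl sum has its generic size $X^{1/4}$, the bound $|V|\ll X^{1/4}$ is only barely too weak, and squeezing past it calls for sharper (van der Corput / exponent-pair) estimates for $V$, or a finer analysis of the boundary region $|\beta|\asymp1/H$.
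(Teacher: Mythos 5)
The paper does not prove Theorem~\ref{Thm_L-Z}; it is quoted from Languasco--Zaccagnini \cite{L-Z} as motivation, so there is no in-paper proof to compare against. Your circle-method outline is essentially the route of that original source: localise the short-interval average via the finite geometric sum $U$, approximate the Weyl sum $V$ and the prime sum $S$ on the major arcs by Gauss-sum and zero-free-region archetypes, and beat down the minor arcs with Cauchy--Schwarz and hybrid mean values. As an outline it is serviceable, modulo two small fixes: the orthogonality has a sign error (with $V(\alpha)=\sum_n e(-n^2\alpha)$ and $U(\alpha)=\sum_N e(N\alpha)$, $\int S\,V\,U$ detects $n^2-m=N$, not $m+n^2=N$; take instead $V(\alpha)=\sum_n e(n^2\alpha)$ and $U(\alpha)=\sum_N e(-N\alpha)$), and the claimed error $O(H\log X+X^{2/3})$ for discarding higher prime powers needs a justification that is not entirely automatic for every $H$ in range. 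What is worth stressing, though, is that the machinery the present paper actually builds --- and which underlies both Suzuki's exponent $\frac{32-4\sqrt{15}}{49}$ for $p+n^2$ and Theorem~\ref{main} --- is a genuinely different decomposition. It never enters the circle. Instead it writes the averaged representation function as $\sum_f\big(\psi(X+H-f)-\psi(X-f)\big)$, inserts the truncated explicit formula for $\psi$ (Lemma~\ref{Lem_9}), and then directly estimates the sums over zeros $S_\rho(X+H)-S_\rho(X)$ by Fourier-expanding the sawtooth $\{u^{1/2}\}-\tfrac12$, applying the first- and second-derivative tests (Lemmas~\ref{Lem_18}--\ref{Lem_19}) to the resulting oscillatory integrals $I_\rho$, and feeding in the Ingham--Huxley zero-density estimate through the exponent function $\phi(\lambda)$. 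The circle-method route is conceptually clean but collides with the wall $H\gtrsim X^{1/2}$ coming from the $q=1$ approximation to $V$, precisely as you identify; the explicit-formula route replaces that obstruction by sharper stationary-phase control of the zero sums, and it is exactly this exchange that permits the exponent to drop below $1/2$.
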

	
	If we obtain (\ref{L-Zresult}) for $H=1$ in Theorem \ref{Thm_L-Z}, Conjecture \ref{Conj_H-L} is true. 
	Therefore, our goal is an improvement of the condition of $H$. 
	
	In 2023, Y. Suzuki improved the range of $H$ in Theorem \ref{Thm_L-Z}. 
	\begin{Thm} [{Suzuki~\cite{Suzuki}}]
	The asymptotic formula \eqref{L-Zresult}
	provided
	\[
	X^{\frac{32-4 \sqrt{15}}{49} + \varepsilon } \le H \le X^{1- \varepsilon}.
	\]
	\end{Thm}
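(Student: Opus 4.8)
The plan is to run the circle method in the shape used for Theorem~\ref{Thm_L-Z}, but to bound the contribution of the zeros of $\zeta$ by combining a van der Corput estimate for the accompanying quadratic sum with classical zero--density bounds, rather than relying on the zero--free region alone. Since the range $H \ge X^{1/2}C^{-1}$ is Theorem~\ref{Thm_L-Z}, one may assume $H < X^{1/2}$. Writing $\psi(y) = \sum_{n \le y}\Lambda(n)$, one has, up to an admissible error (prime powers, the terms with $m^{2} > X$, and the difference between $\Lambda$ and $\log p$, all $o(HX^{1/2})$),
\[
	\sum_{X < N \le X+H}R_{\textup{HL}}(N) = \sum_{m^{2} \le X}\bigl(\psi(X+H-m^{2}) - \psi(X-m^{2})\bigr).
\]
Following \cite{L-Z} one inserts a smooth weight in the $m$--sum, which removes the boundary term $m \approx \sqrt{X}$ and lets one work with $\int_{0}^{1}\widetilde S(\alpha)\widetilde V(\alpha)\widehat w(\alpha)\,d\alpha$ instead, where $\widetilde S$ and $\widetilde V$ are the $e^{-n/X}$-smoothed von Mangoldt and theta sums and $\widehat w$ a smoothed selector of $(X,X+H]$. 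Applying the truncated explicit formula for $\psi$ at height $T := X/H$ (so $T > X^{1/2}$), the term $H$ in $\psi(y+H)-\psi(y) = H + \cdots$ contributes $H\,\#\{m : m^{2} \le X\} = HX^{1/2}(1+o(1))$, while the truncation error is $\ll HX^{1/2}(\log X)^{2} = o(HX^{1/2}C^{-1})$.

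It remains to bound the contribution $\mathcal E$ of the zeros. Using $\tfrac{(y+H)^{\rho}-y^{\rho}}{\rho} = \int_{0}^{H}(y+v)^{\rho-1}\,dv$ and interchanging, $\mathcal E = \sum_{|\gamma| \le T}\int_{0}^{H}G_{\rho}(X+v)\,dv$, where $G_{\rho}(Y) = \sum_{m^{2} \le Y}(Y-m^{2})^{\rho-1}$ (smoothed). Poisson summation writes $G_{\rho}(Y)$ as $\tfrac{\sqrt{\pi}}{2}\,Y^{\rho-1/2}\tfrac{\Gamma(\rho)}{\Gamma(\rho+1/2)}$ plus oscillatory terms; stationary phase on the main part, the bound $\Gamma(\rho)/\Gamma(\rho+1/2) \ll |\gamma|^{-1/2}$, and a van der Corput estimate on the rest give, up to powers of $\log X$,
\[
	G_{\rho}(Y) \ll
	\begin{cases}
		Y^{\beta-1/2}|\gamma|^{-1/2}, & |\gamma| \le Y^{1/2},\\
		Y^{\beta-1}|\gamma|^{1/2}, & |\gamma| > Y^{1/2}
	\end{cases}
	\qquad (\rho = \beta + i\gamma).
\]
Since $T > X^{1/2}$, integrating trivially in $v$ gives $\mathcal E \ll \mathcal E_{1} + \mathcal E_{2}$ with
\[
	\mathcal E_{1} = H \sum_{|\gamma| \le X^{1/2}} X^{\beta-1/2}|\gamma|^{-1/2},
	\qquad
	\mathcal E_{2} = H \sum_{X^{1/2} < |\gamma| \le X/H} X^{\beta-1}|\gamma|^{1/2}.
\]

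One now splits the zeros dyadically in $|\gamma| \asymp t$ and $\beta \approx \sigma$, discards $\sigma$ near $1$ by the Vinogradov--Korobov zero--free region, and applies the Ingham--Huxley density bounds $N(\sigma,t) \ll t^{A(\sigma)(1-\sigma)}(\log t)^{O(1)}$ with $A(\sigma) = 3/(2-\sigma)$ on $[\tfrac12,\tfrac34]$ and $A(\sigma) = 3/(3\sigma-1)$ on $[\tfrac34,1]$. One finds $\mathcal E_{1} \ll HX^{1/2}C^{-1}$, which is harmless. For $\mathcal E_{2}$ the extremal zeros have height $t \asymp X/H$, and the requirement $\mathcal E_{2} \ll HX^{1/2}C^{-1}$, with $H = X^{\eta}$, reduces to
\[
	\sigma + (1-\eta)A(\sigma)(1-\sigma) \le 1 + \tfrac{\eta}{2}
	\qquad \bigl(\tfrac12 \le \sigma \le 1\bigr),
\]
i.e.\ $\eta \ge \dfrac{(\sigma-1)+A(\sigma)(1-\sigma)}{A(\sigma)(1-\sigma)+\tfrac12}$ for every $\sigma$. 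On $[\tfrac12,\tfrac34]$ the right-hand side simplifies to $\dfrac{2(1-\sigma^{2})}{8-7\sigma}$, which attains its maximum at $\sigma_{*} = \dfrac{8-\sqrt{15}}{7}$ with value $\dfrac{32-4\sqrt{15}}{49}$; on $[\tfrac34,1]$ it is strictly smaller. Hence the estimate holds, with an $\varepsilon$ to spare to absorb the log powers, exactly for $H \ge X^{(32-4\sqrt{15})/49+\varepsilon}$, as asserted.

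I expect the decisive point to be $\mathcal E_{2}$: for zeros of moderate height up to $X/H$ the van der Corput saving in the $m$--sum is only $|\gamma|^{1/2}Y^{\beta-1}$, genuinely weaker than the $|\gamma|^{-1/2}Y^{\beta-1/2}$ available at low frequency, so the admissible range of $H$ is dictated by how densely zeros can cluster near $\sigma \approx 0.59$ at height about $X/H$ --- precisely where the optimisation over $\sigma$ and $t$ pins down the exponent $(32-4\sqrt{15})/49$. Everything else --- the prime--power contribution, the terms with $m^{2} > X$, the error in the truncated explicit formula, the tails of the smoothing, and the terms with $m$ near $\sqrt{Y}$ in $G_{\rho}$ (harmless after smoothing, and in any case subject only to the weaker constraint $\eta \ge \tfrac14$) --- is of lower order and handled by standard estimates.
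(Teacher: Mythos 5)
Your plan follows Suzuki's argument in its essentials, and your final optimisation is correct: you split the zero contribution dyadically, use the Ingham--Huxley density theorem together with the Korobov--Vinogradov zero-free region, find that the binding constraint comes from zeros of height $|\gamma|\asymp X/H$ with $\beta$ near $\sigma_{*}=\tfrac{8-\sqrt{15}}{7}$, and the maximisation of $\tfrac{2(1-\sigma^{2})}{8-7\sigma}$ on $[\tfrac12,\tfrac34]$ indeed gives $\tfrac{32-4\sqrt{15}}{49}$. This is the same critical point as in Lemma~\ref{Lem_16} of the paper, via $\Delta=1-\eta$ and $\lambda(\Delta)=\Delta$ at $K=X/H$, and your constraint $(1-\eta)A(\sigma)(1-\sigma)+\sigma\le 1+\eta/2$ is exactly $\phi(\Delta)+\tfrac12\Delta\le\tfrac32$ unpacked.

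The one step that does not survive scrutiny as written is the pointwise bound $G_{\rho}(Y)\ll Y^{\beta-1}|\gamma|^{1/2}$ for $|\gamma|>Y^{1/2}$. The amplitude $(Y-m^{2})^{\beta-1}$ is \emph{not} $\ll Y^{\beta-1}$ on the full range $m\le\sqrt{Y}$: for $\beta<1$ it blows up as $m\to\sqrt{Y}$, its total variation is of order $1$ (not $Y^{\beta-1}$), and partial summation against a van der Corput estimate of the phase $\gamma\log(Y-m^{2})$ gives at best something like $|\gamma|^{1/2}\log Y$ from the boundary range $m\in[\sqrt{Y}/2,\sqrt{Y}]$ --- larger than $Y^{\beta-1}|\gamma|^{1/2}$ by a factor $Y^{1-\beta}$. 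You flag this and say it is ``harmless after smoothing,'' but a smoothing that cuts off a constant proportion of $m$ would damage the main term, and a smoothing over a short window still leaves a singular amplitude to control. Suzuki (and the paper's analogue, Lemma~\ref{Lem_24}) avoid the issue entirely by never differentiating through the boundary: one works with $S_{\rho}(Q)=\tfrac1\rho\sum_{m^{2}\le X}(Q-m^{2})^{\rho}$, whose summand vanishes (rather than blows up) at $m^{2}=Q$, integrates by parts against $\{u^{1/2}\}-\tfrac12$ so that the boundary contributes only $O(\log X)$, and bounds the \emph{difference} $S_{\rho}(X+H)-S_{\rho}(X)$ using the truncation $U=\min(4H|\gamma|,X)$ in Lemma~\ref{Lem_23}; this yields $H^{\beta}|\gamma|^{\beta-1/2}(\log X)^{2}+O(\log X)$ rather than $HX^{\beta-1}|\gamma|^{1/2}$. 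The two expressions coincide at the critical height $|\gamma|\asymp X/H$, which is why your optimisation still lands on the right exponent, but the intermediate estimate as you state it is a genuine gap; replacing it with the $H^{\beta}|\gamma|^{\beta-1/2}$ bound (and carrying along the $O(\log X)$ boundary term, which contributes $\ll T(\log X)^{2}$ after summing over zeros and is absorbed once $H\ge X^{1/4+\varepsilon}$) makes the argument rigorous and matches the paper's $R_{3}$.
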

	
	In this article, we study the following function that replaces squares in $R_{\textup{HL}}(N) = \sum_{p+n^{2}=N} \log p$ with square-full numbers: 
	$$
		R(N) \coloneqq \sum_{\substack{p+f=N \\ f \in \mathscr{Q} }} \log p,
	$$
	in which $\mathscr{Q} \coloneqq \{ n \in \mathbb{N} \, | \, n \mbox{ is square-full numbers} \} $.
	Recall that a square-full number is a positive integer $n$ such that if $p$ is a prime dividing $n$, then $p^{2}$ divides $n$. 
	
	The author and Y. Suzuki obtained the following asymptotic formula.

	\begin{Thm} [\cite{O-S}]
		For $A\ge1$, $\varepsilon>0$, and $X^{\frac{1}{2}+\varepsilon} \le H \le X^{1- \varepsilon}$, we have
		\begin{equation} \label{result}
			\sum_{X < N \le X+H} R(N) = \frac{\zeta(\frac{3}{2})}{\zeta(3)}HX^{\frac{1}{2}} \Big( 1 + O \big( (\log X)^{-A} \big) \Big) ,
		\end{equation}
		where the implicit constant depends on $A,\varepsilon$.
	\end{Thm}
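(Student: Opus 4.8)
The plan is to collapse the average over $N$ into a single average over the square-full numbers, read the main term off from their counting function, and bound the remaining error by a Cauchy--Schwarz inequality against the short-interval mean square of $\psi$. Equivalently, this is the major/minor-arc split of the Hardy--Littlewood method of~\cite{L-Z} and~\cite{Suzuki}, with the minor-arc contribution recast as a mean-square estimate for primes in short intervals.

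First I would unfold the definition and interchange the order of summation. Writing $\theta(y)=\sum_{p\le y}\log p$ and $\psi(y)=\sum_{n\le y}\Lambda(n)$ (both $0$ for $y\le 0$),
\[
	\sum_{X<N\le X+H}R(N)=\sum_{\substack{f\in\mathscr{Q}\\ f\le X+H}}\bigl(\theta(X+H-f)-\theta(X-f)\bigr)=\sum_{\substack{f\in\mathscr{Q}\\ f\le X+H}}\bigl(\psi(X+H-f)-\psi(X-f)\bigr)+E_{1},
\]
where $E_{1}$ collects the prime-power terms; one checks elementarily that $E_{1}\ll X^{2/3+\varepsilon}+H\log X$, which is admissible since $H\ge X^{1/2+\varepsilon}$. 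The terms with $X-H\le f\le X+H$, on which $\psi(X-f)$ is $0$ or $O(H\log X)$ and $\#\{f\in\mathscr{Q}\cap[X-H,X+H]\}\ll HX^{-1/2}+X^{1/6}$, likewise contribute $O\bigl(H^{2}X^{-1/2}+HX^{1/6}\bigr)$, which is admissible. Hence, up to errors that are $O\bigl(HX^{1/2}(\log X)^{-A}\bigr)$,
\[
	\sum_{X<N\le X+H}R(N)=\sum_{\substack{f\in\mathscr{Q}\\ f< X-H}}\bigl(\psi(X+H-f)-\psi(X-f)\bigr).
\]

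Next I would split off the main term by writing $\psi(X+H-f)-\psi(X-f)=H+\mathcal D(f)$ with $\mathcal D(f):=\bigl(\psi(X+H-f)-\psi(X-f)\bigr)-H$. The sum of the constant term is $H\cdot\#\{f\in\mathscr{Q}: f<X-H\}$, and the Bateman--Grosswald asymptotic — equivalently, Perron's formula applied to $\sum_{f\in\mathscr{Q}}f^{-s}=\zeta(2s)\zeta(3s)/\zeta(6s)$, whose poles at $s=\tfrac12$ and $s=\tfrac13$ carry the residues producing the constants $\zeta(3/2)/\zeta(3)$ and $\zeta(2/3)/\zeta(2)$ —
\[
	\#\{f\in\mathscr{Q}: f\le x\}=\frac{\zeta(3/2)}{\zeta(3)}x^{1/2}+\frac{\zeta(2/3)}{\zeta(2)}x^{1/3}+O\bigl(x^{1/6}\bigr)
\]
shows that this sum equals $\frac{\zeta(3/2)}{\zeta(3)}HX^{1/2}+O\bigl(HX^{1/3}+H^{2}X^{-1/2}\bigr)$, both error terms being $\ll HX^{1/2}(\log X)^{-A}$. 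It therefore remains to prove $\sum_{f\in\mathscr{Q},\,f<X-H}\mathcal D(f)\ll HX^{1/2}(\log X)^{-A}$. By Cauchy--Schwarz, by $\#\{f\in\mathscr{Q}: f\le X\}\ll X^{1/2}$, by the substitution $m=X-f$, and by enlarging the sparse set $\{X-f: f\in\mathscr{Q}\}$ to all integers in $(H,X]$ (the summand being non-negative),
\[
	\Bigl|\sum_{\substack{f\in\mathscr{Q}\\ f<X-H}}\mathcal D(f)\Bigr|\ll X^{1/4}\Bigl(\sum_{H<m\le X}\bigl(\psi(m+H)-\psi(m)-H\bigr)^{2}\Bigr)^{1/2}.
\]
The last sum is the classical short-interval mean square of $\psi$, for which the bound $\sum_{H<m\le X}\bigl(\psi(m+H)-\psi(m)-H\bigr)^{2}\ll HX(\log X)^{c}$ is available unconditionally throughout $X^{1/2+\varepsilon}\le H\le X^{1-\varepsilon}$ (for $H\ge X^{7/12+\varepsilon}$ one may instead estimate $\sum_{f}\mathcal D(f)$ termwise via Huxley's prime number theorem in short intervals, which gives an even larger saving). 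Hence the left side is $\ll H^{1/2}X^{3/4}(\log X)^{c/2}$, which is $\ll HX^{1/2}(\log X)^{-A}$ as soon as $H\gg X^{1/2}(\log X)^{2A+c}$, in particular for $X^{1/2+\varepsilon}\le H\le X^{1-\varepsilon}$. Collecting the three contributions yields~\eqref{result}.

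The real obstacle, and the barrier the present paper must break, is the Cauchy--Schwarz step: it loses the full factor $X^{1/4}=\bigl(\#\{f\in\mathscr{Q}:f\le X\}\bigr)^{1/2}$, while the short-interval variance of $\psi$ is genuinely of order $HX\log X$, so the resulting bound $H^{1/2}X^{3/4}$ beats the main term $HX^{1/2}$ only for $H>X^{1/2}$ — and no sharpening of the mean-square input can help. To go below $X^{1/2}$ one must discard the blunt Cauchy--Schwarz and run a genuine circle-method argument in which the square-full side enters through more than its $L^{2}$ norm: one dissects the minor arcs finely, on the moderately-major part exploits the Gauss-sum structure of the exponential sum $\sum_{f\in\mathscr{Q}}e^{-f/X}e(f\alpha)$ over square-full numbers $f=a^{2}b^{3}$ near rationals of moderate denominator $q$, combined with a mean value over the Farey fractions (here $e(t)=e^{2\pi it}$), and on the genuinely minor arcs invokes the Vinogradov--Vaughan bound $\sum_{n}\Lambda(n)e^{-n/X}e(n\alpha)\ll\bigl(Xq^{-1/2}+X^{4/5}+X^{1/2}q^{1/2}\bigr)(\log X)^{c}$ for the prime exponential sum. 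Optimising the radius of this dissection against the competing estimates is exactly what produces the exponent $\tfrac{32-4\sqrt{15}}{49}$, as in Suzuki's~\cite{Suzuki} treatment of the companion problem of one prime plus one square.
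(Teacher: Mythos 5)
Your overall reduction is fine, and the main-term computation via the Bateman–Grosswald count of square-full numbers is correct, but the error-term estimate has a genuine gap: the short-interval mean-square bound you invoke is not available unconditionally. The best unconditional estimate for the Selberg integral, obtained from Huxley/Heath-Brown zero-density theorems together with the Vinogradov--Korobov zero-free region, is
\[
\sum_{m \le X}\bigl(\psi(m+H)-\psi(m)-H\bigr)^{2} \ll H^{2} X (\log X)^{-A}
\qquad (H \ge X^{1/6+\varepsilon}),
\]
not $\ll HX(\log X)^{c}$; the latter (genuine square-root cancellation in the variance) is only known under the Riemann Hypothesis. Feeding the unconditional bound into your Cauchy--Schwarz step gives
\[
\Bigl|\sum_{f}\mathcal D(f)\Bigr| \ll X^{1/4}\cdot \bigl(H^{2}X(\log X)^{-A}\bigr)^{1/2} = HX^{3/4}(\log X)^{-A/2},
\]
which exceeds the main term $HX^{1/2}$ by the factor $X^{1/4}$ for \emph{every} $H$, so no choice of $A$ closes the gap. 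Your fallback --- estimating $\mathcal D(f)$ termwise via the short-interval prime number theorem --- requires $H \ge X^{7/12+\varepsilon}$, leaving the range $X^{1/2+\varepsilon}\le H < X^{7/12+\varepsilon}$ of the theorem uncovered.

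This is exactly why the paper (and \cite{O-S}) does not argue via a blunt $L^{2}$-bound on the square-full side. Instead it truncates to $\mathscr{Q}_{B}$ (so that each square-root gap contains $\ll B$ admissible $f$, Lemma~\ref{Lem_B}), substitutes the truncated explicit formula (Lemma~\ref{Lem_9}) to write the sum over $f\in\mathscr{Q}_{B}$ of $\psi(Q-f)$ as $S(Q)-\sum_{|\gamma|\le T}S_{\rho}(Q)$, and then estimates each $S_{\rho}(X+H)-S_{\rho}(X)$ individually by van der Corput-type exponential-integral bounds (Lemmas~\ref{Lem_23}, \ref{Lem_24}), so that the zero-density input (Lemma~\ref{Lem_13}) enters \emph{after} the cancellation between the square-full sum and each zero has already been extracted. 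That structure wins back the factor $X^{1/4}$ that your Cauchy--Schwarz discards, and is what lets the method penetrate below $H=X^{1/2}$.
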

	Our main theorem is the following improvement of the  above range of $H$. 
	\begin{Thm} \label{main}
		For $\varepsilon>0$, we have the asymptotic formula (\ref{result}) provided 
		$$
			X^{\frac{32-4\sqrt{15}}{49}+\varepsilon} \le H \le X^{1- \varepsilon}. 
		$$
	\end{Thm}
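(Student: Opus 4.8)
The plan is to run the circle method, adapting the treatment of the prime-plus-square problem by Languasco--Zaccagnini~\cite{L-Z} and its refinement by Suzuki~\cite{Suzuki} to square-full numbers. Writing $e(\theta)=e^{2\pi i\theta}$ and
\[
	P(\alpha)=\sum_{n\le X+H}\Lambda(n)e(n\alpha),\qquad
	V(\alpha)=\sum_{\substack{f\le X+H\\ f\in\mathscr{Q}}}e(f\alpha),\qquad
	T(\alpha)=\sum_{X<m\le X+H}e(m\alpha),
\]
one has, after replacing $\log p$ by the von Mangoldt function (at negligible cost) and using orthogonality, $\sum_{X<N\le X+H}R(N)=\int_0^1 P(\alpha)V(\alpha)\overline{T(\alpha)}\,d\alpha$ up to a negligible error. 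The one structural novelty relative to the square case is that $V(\alpha)$ is not a single Weyl sum; using the unique factorization $\mathscr{Q}=\{a^2b^3:a\ge1,\ b\text{ squarefree}\}$ I would write
\[
	V(\alpha)=\sum_{b\le(X+H)^{1/3}}\mu^2(b)\sum_{a\le\sqrt{(X+H)/b^3}}e\big(a^2b^3\alpha\big),
\]
so that each inner sum is a quadratic Weyl sum and every exponential-sum estimate available in the square case applies to the term indexed by $b$ with $\sqrt{X}$ replaced by $\sqrt{X/b^3}$ and the modulus scaled by $b^3$; the terms with $b>1$ will be of lower order because the square-full numbers are dominated by the perfect squares.

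Next I would fix a Farey dissection of $[0,1)$ into major arcs $\mathfrak{M}$ about $a/q$ with $q\le Q$ and minor arcs $\mathfrak{m}$, where $Q$ is a parameter to be optimized at the end. On $\mathfrak{M}$ one replaces $P(\alpha)$ by its main term via the explicit formula; since only a saving of $(\log X)^{-A}$ is required here, the classical de la Vall\'ee-Poussin zero-free region suffices, so no input on primes in short intervals is needed and the quantity $C$ of Theorem~\ref{Thm_L-Z} can be replaced by $(\log X)^{-A}$. For $V(\alpha)$ on $\mathfrak{M}$ one uses $\#\{f\in\mathscr{Q}:f\le Y\}=\frac{\zeta(3/2)}{\zeta(3)}Y^{1/2}+O(Y^{1/3})$ and its version in arithmetic progressions. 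The contribution of $q=1$ is $\sum_{f\in\mathscr{Q},\,f\le X}H=\frac{\zeta(3/2)}{\zeta(3)}HX^{1/2}+O(HX^{1/3})$; the part $X<f\le X+H$ contributes $O(HX^{1/2-\varepsilon})$, and the arcs with $q\ge2$ contribute only to the error term once summed over $(X,X+H]$. These steps are routine and produce the main term in \eqref{result} with errors of the desired size, conditional on control of the minor arcs.

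The crux, and the main obstacle, is the minor-arc bound
\[
	\int_{\mathfrak{m}}\big|P(\alpha)V(\alpha)T(\alpha)\big|\,d\alpha\ll HX^{1/2}(\log X)^{-A}.
\]
The cruder estimates that yield the range $H\ge X^{1/2+\varepsilon}$ of \eqref{result} in \cite{O-S} do not suffice here. Following Suzuki, I would instead decompose $P(\alpha)$ into Type~I and Type~II bilinear pieces by Vaughan's identity (or Heath-Brown's identity) and bound the resulting trilinear integrals by combining: (i) mean-value theorems for Dirichlet polynomials and the large sieve; (ii) Weyl- and van der Corput-type bounds for $V(\alpha)$, obtained from the quadratic Weyl sums above and made uniform in the cubic parameter $b$ (handling small $b$ exactly as in the square case and the tail $b$ large by trivial bounds, then summing the series in $b$); and (iii) the $L^2$-concentration $\int_0^1|T(\alpha)|^2\,d\alpha\ll H$ together with the pointwise bound $|T(\alpha)|\ll\min(H,\|\alpha\|^{-1})$, which is exactly the ingredient that permits $H$ below $X^{1/2}$. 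Balancing $Q$ and the Vaughan cutoffs against these estimates is what pins down the threshold; I expect the quadratic optimization that appears here to be the source of the $\sqrt{15}$ in the exponent $\frac{32-4\sqrt{15}}{49}$. The genuinely new labour relative to Suzuki's work is to carry the dependence on the cubic parameter $b$ through every one of these minor-arc estimates and to check that the sums over $b$ that result remain of the required size; this should entail no real loss, since square-full numbers are concentrated on perfect squares, but it is where I expect most of the technical effort to go.
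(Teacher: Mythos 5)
Your proposal takes a genuinely different route from the paper's. The paper does not set up a circle method at all: following Suzuki's method, it reduces $\sum_{X<N\le X+H}R(N)$ to $\sum_{f\le X,\,f\in\mathscr{Q}_B}\bigl(\psi(X+H-f)-\psi(X-f)\bigr)$ with an aggressive logarithmic cutoff $B=(\log X)^{4A}$ on the cubic part (Lemma \ref{Lem_R(N)=R_{B}(N)+(error)}, Lemma \ref{Lem_21}), inserts the truncated explicit formula $\psi(x)=x-\sum_{|\gamma|\le T}x^{\rho}/\rho+O(\cdot)$ (Lemma \ref{Lem_9}), and then bounds the resulting sums over zeros $\sum_{|\gamma|\le T}\bigl(S_{\rho}(X+H)-S_{\rho}(X)\bigr)$ by combining a Fourier expansion of the sawtooth with exponential-integral lemmas (Lemma \ref{Lem_23}), the Korobov--Vinogradov zero-free region (Lemma \ref{Lem_zerofree}), and the Ingham--Huxley zero-density estimate packaged as Suzuki's function $\phi(\lambda)$ (Lemmas \ref{Lem_I-H}, \ref{Lem_13}). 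The threshold $H\ge X^{\frac{32-4\sqrt{15}}{49}+\varepsilon}$ then drops out of solving $\phi(\lambda)+\tfrac12\lambda=\tfrac32$ on the Huxley piece $\phi(\lambda)=3\lambda+2(1-\sqrt{3\lambda})$, which gives $\lambda_2=\tfrac{17+4\sqrt{15}}{49}$ (Lemma \ref{Lem_16}); that is the actual origin of the $\sqrt{15}$.

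There is a concrete misstep in your plan. You write ``following Suzuki, I would decompose $P(\alpha)$ into Type~I and Type~II bilinear pieces by Vaughan's identity,'' but Suzuki's paper does not treat the minor arcs this way --- there are no minor arcs, no Farey dissection, and no Vaughan decomposition in either Suzuki's work or this paper. Your minor-arc bound $\int_{\mathfrak m}|PVT|\ll HX^{1/2}(\log X)^{-A}$ is therefore unsupported by the cited source, and it is precisely where all the content of the theorem lives: your ingredients (iii) and a pointwise $\sup_{\mathfrak m}|V|$ bound are far from enough once $H<X^{1/2}$, and you would need to thread a short-interval zero-density input (Gallagher's lemma with Ingham--Huxley, say) through the trilinear estimate and then verify that the optimization actually produces $\frac{32-4\sqrt{15}}{49}$ rather than merely ``some'' threshold. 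In the same vein, your remark that the de la Vall\'ee-Poussin zero-free region suffices so that ``no input on primes in short intervals is needed'' is misleading: the zero-free region controls the main term, but the admissible range of $H$ is governed by zero-density estimates, which must appear somewhere. Your splitting of $V(\alpha)$ by the cubic parameter $b$ is sound and is the analogue of the paper's passage to $\mathscr{Q}_B$, but the paper's truncation $b\le(\log X)^{4A}$ shows the $b$-sum is essentially harmless --- so the ``genuinely new labour'' you expect there is not where the difficulty actually sits.
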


%---------------------------------------------------2----------------------------------------------------
\section{Preliminary lemmas}
	In this section, we prepare some lemmas.
	
	For a real number $B \ge 2$, we define
	\begin{equation*}
			\widetilde{R}_{B}(N) : = \sum_{\substack{p + a^{2}b^{3} = N \\ b \le B}} (\log p) \mu(b)^{2},
	\end{equation*}
    where $\mu(n)$ is a M\"{o}bius function of $n$. 
	
	\begin{Lem} [\cite{O-S}] \label{Lem_R(N)=R_{B}(N)+(error)}
		For a positive number $N$ and a real number $B \ge 1$, we have
		$$
			R(N) = \widetilde{R}_{B}(N) + O( (N^{\frac{1}{2}} \log N ) B^{- \frac{1}{2}} ).
		$$
	\end{Lem}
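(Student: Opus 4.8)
The plan is to exploit the canonical representation of square-full numbers. Recall that every square-full number $f$ can be written uniquely as $f = a^{2}b^{3}$ with integers $a, b \ge 1$ and $b$ squarefree: for each prime $p \mid f$ the exponent $e_{p}$ satisfies $e_{p}\ge2$, and writing $b = \prod_{p \,:\, e_{p}\textup{ odd}} p$ forces $a$, so the decomposition exists and is unique; conversely $a^{2}b^{3}$ with $b$ squarefree is always square-full. Since $\mu(b)^{2}$ is the indicator of $b$ being squarefree, this gives
$$
	R(N) = \sum_{\substack{p+f=N \\ f\in\mathscr{Q}}} \log p = \sum_{p+a^{2}b^{3}=N} (\log p)\,\mu(b)^{2},
$$
where the sum now runs over all integers $a,b\ge1$. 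Comparing with the definition of $\widetilde{R}_{B}(N)$, the difference is exactly the tail in the variable $b$:
$$
	R(N) - \widetilde{R}_{B}(N) = \sum_{\substack{p+a^{2}b^{3}=N \\ b>B}} (\log p)\,\mu(b)^{2}.
$$

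Next I would estimate this tail trivially. Dropping $\mu(b)^{2}\le1$, for each pair $(a,b)$ there is at most one prime $p = N - a^{2}b^{3}$, and it satisfies $\log p \le \log N$. Hence
$$
	\bigl| R(N) - \widetilde{R}_{B}(N) \bigr| \le (\log N) \sum_{b>B} \#\{ a\ge1 : a^{2}b^{3}\le N \} \le (\log N) \sum_{B<b\le N^{1/3}} \frac{N^{1/2}}{b^{3/2}}.
$$
Since the exponent $3/2$ exceeds $1$, the tail $\sum_{b>B} b^{-3/2}$ converges and is $\ll B^{-1/2}$, so the right-hand side is $\ll N^{1/2}B^{-1/2}\log N$, which is the claimed bound. (If $B \ge N^{1/3}$ the $b$-sum is empty and the difference vanishes, consistent with a nonnegative error term.)

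There is no genuine obstacle here; the lemma is a clean truncation estimate. The only points that need a little care are (i) verifying that $f \mapsto (a,b)$ is a bijection between square-full numbers and pairs with $b$ squarefree, so that no square-full $f$ is counted with the wrong multiplicity when one passes from the sum over $f\in\mathscr{Q}$ to the sum over $(a,b)$ weighted by $\mu(b)^{2}$, and (ii) bounding the lattice-point count by $\#\{a : a^{2}b^{3}\le N\} \le N^{1/2}b^{-3/2}$ and summing the resulting convergent series without picking up an extra logarithmic factor — which works precisely because the $b$-sum is absolutely convergent.
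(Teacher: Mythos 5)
Your proof is correct. The paper does not prove this lemma; it quotes it directly from \cite{O-S}, so there is no in-text argument to compare against, but your approach is the natural one and is sound in every detail: the bijection $f \leftrightarrow (a,b)$ with $b$ squarefree is verified correctly (uniqueness follows by reading exponents modulo $2$, and surjectivity onto $\mathscr{Q}$ is immediate), the weight $\mu(b)^{2}$ exactly encodes the squarefree condition so that $R(N)-\widetilde{R}_{B}(N)$ is precisely the tail $b>B$, and the trivial bound $\log p\le\log N$ together with $\#\{a:a^{2}b^{3}\le N\}\le N^{1/2}b^{-3/2}$ and $\sum_{b>B}b^{-3/2}\ll B^{-1/2}$ gives the stated error term. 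The only cosmetic remark is that the sum over $b$ need not be truncated at $N^{1/3}$ — convergence of $\sum b^{-3/2}$ already handles the tail — but this does no harm and your parenthetical observation about the sum being empty when $B\ge N^{1/3}$ is accurate.
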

	
	By Lemma \ref{Lem_R(N)=R_{B}(N)+(error)}, we have
	$$
		\sum_{X < N \le X+H} R(N) = \sum_{X \le N \le X+H} \widetilde{R}_{B}(N) + O \Bigg( \sum_{X < N \le X+H} (N^{\frac{1}{2}} \log N ) B^{- \frac{1}{2}} \Bigg).
	$$
	If $B = (\log X)^{4A}$ for $A >1$, we have
	\begin{equation}
	\label{R_to_RB}
		\sum_{X < N \le X+H} R(N) = \sum_{X \le N \le X+H} \widetilde{R}_{B}(N) + O( HX^{\frac{1}{2}}(\log X)^{-A}).
	\end{equation}
	We consider
	the sum of $\widetilde{R}_{B}(N)$
	on the right-hand side.
	
	Let $\mathscr{Q}_{B} \coloneqq \{ n \in \mathbb{N} \, | \, n=a^{2}b^{3}, b \le B \mbox{ and } b \mbox{ is square-free} \}$. Then, we have
	\begin{align*}
		\sum_{X < N \le X+H} \widetilde{R}_{B}(N) 
		=& \sum_{\substack{X < p+f \le X+H \\ f \in \mathscr{Q}_{B}}} \log p . 
	\end{align*}
	
	To evaluate the short interval sum with square-full numbers, we introduce some lemmas. 
		\begin{Lem} [\cite{O-S}] \label{Lem_B}
        Let $B$ be a real number greater than or equal to $1$. 
		For real numbers $X$, $H$ with $4 \le H \le X$ and $n^{2} \le X+H$, we have 
		$$
		\sum_{\substack{n^{2} \le f < (n+1)^{2} \\ f \le X+H \\ f \in \mathscr{Q}_{B}}} 1 
		\ll B.
		$$
	\end{Lem}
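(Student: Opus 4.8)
The plan is a straightforward counting argument: I group the square-full numbers $f = a^{2}b^{3}$ lying in the window $[n^{2},(n+1)^{2})$ according to their square-free part $b$, observe that for each fixed $b \le B$ there are only $O(1)$ admissible values of $a$, and then sum over the at most $B$ values of $b$ to obtain $O(B)$.

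In more detail, I would first drop the side condition $f \le X+H$ from the sum, which is legitimate since it only deletes terms and we are proving an upper bound; in particular the hypotheses $4 \le H \le X$ and $n^{2} \le X+H$ are not actually needed. Every $f$ counted has the form $f = a^{2}b^{3}$ with $a \in \mathbb{N}$ and $b \le B$ a square-free positive integer, so
\[
	\sum_{\substack{n^{2} \le f < (n+1)^{2} \\ f \le X+H \\ f \in \mathscr{Q}_{B}}} 1
	\le
	\sum_{b \le B} \#\bigl\{ a \in \mathbb{N} : n^{2} \le a^{2}b^{3} < (n+1)^{2} \bigr\},
\]
where I am free to count the pairs $(a,b)$ with multiplicity, since that only inflates the left-hand side, so no uniqueness of the representation is needed. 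For fixed $b$ the inner condition is equivalent to $n b^{-3/2} \le a < (n+1) b^{-3/2}$, an interval of length $b^{-3/2} \le 1$ which therefore contains at most $b^{-3/2} + 1 \le 2$ integers. Hence the whole sum is $\le 2B \ll B$, as claimed.

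I do not expect any genuine obstacle here; the estimate is elementary and the implied constant is explicit. The only point worth a moment's care is that the number of integers in $[n b^{-3/2}, (n+1) b^{-3/2})$ must be bounded by $b^{-3/2} + 1$ rather than by $b^{-3/2}$, because for $b = 1$ the interval $[n, n+1)$ does contain the integer $a = n$; for every $b \ge 2$ the interval has length strictly less than $1$ and so contributes at most one value of $a$.
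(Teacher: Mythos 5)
Your proof is correct. Note that this paper itself does not prove Lemma \ref{Lem_B}; it is quoted from the earlier paper \cite{O-S} without an in-text proof, so there is nothing here to compare against line by line. That said, your argument—fix the square-free part $b \le B$, observe that $n^{2} \le a^{2}b^{3} < (n+1)^{2}$ confines $a$ to an interval $[\,n\,b^{-3/2},\ (n+1)b^{-3/2}\,)$ of length $b^{-3/2} \le 1$, hence to at most two integers, and sum over the at most $B$ choices of $b$—is exactly the elementary counting one expects behind this lemma, and your observations that the constraint $f \le X+H$ and the hypotheses on $X,H$ are superfluous for the upper bound, and that uniqueness of the representation $f = a^{2}b^{3}$ is not needed, are both sound.
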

	
	\begin{Lem} \label{Lem_sum_mobius}
		For real numbers $x \le 1$ and $s >1$, we have
		\[
			\sum_{n \le x} \frac{\mu(n)^{2}}{n^{2}}
			= \frac{\zeta(s)}{\zeta(2s)} + O(x^{-s+1}),
		\]
        where $\zeta(s)$ is the Riemann zeta function. 
	\end{Lem}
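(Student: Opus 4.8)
The plan is to reduce the partial sum to the complete Dirichlet series plus an easily controlled tail, so assume throughout that $x \ge 1$ (otherwise there is nothing to sum). First I would record the elementary identity $\mu(n)^{2} = \sum_{d^{2}\mid n}\mu(d)$ (the indicator of squarefree integers), which, upon interchanging the order of summation, yields for $s>1$
\[
	\sum_{n=1}^{\infty}\frac{\mu(n)^{2}}{n^{s}}
	= \sum_{d=1}^{\infty}\frac{\mu(d)}{d^{2s}}\sum_{m=1}^{\infty}\frac{1}{m^{s}}
	= \frac{\zeta(s)}{\zeta(2s)},
\]
all manipulations being justified by absolute convergence for $s>1$. (Equivalently, one may evaluate the Euler product $\prod_{p}\bigl(1+p^{-s}\bigr)=\prod_{p}\frac{1-p^{-2s}}{1-p^{-s}}=\zeta(s)/\zeta(2s)$.)

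Second, I would split off the tail and write
\[
	\sum_{n\le x}\frac{\mu(n)^{2}}{n^{s}}
	= \frac{\zeta(s)}{\zeta(2s)} - \sum_{n>x}\frac{\mu(n)^{2}}{n^{s}},
\]
so that it only remains to estimate the second sum. For this, bound it trivially by $0\le \sum_{n>x}\mu(n)^{2}n^{-s}\le \sum_{n>x}n^{-s}$ and compare the latter with the integral $\int_{x}^{\infty}t^{-s}\,dt=\frac{x^{1-s}}{s-1}$ (the one discarded leading term contributing an extra $O(x^{-s})$), which gives $\sum_{n>x}n^{-s}\ll x^{1-s}$ for $x\ge1$. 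Combining the two displays gives the stated asymptotic $\sum_{n\le x}\mu(n)^{2}n^{-s}=\zeta(s)/\zeta(2s)+O(x^{1-s})$.

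There is essentially no obstacle here: the only points that deserve a word of care are that the rearrangement of the series defining $\zeta(s)/\zeta(2s)$ is legitimate by absolute convergence for $s>1$, and that the implied constant in the error term is permitted to depend on $s$ (it grows like $(s-1)^{-1}$ as $s\to1^{+}$), which is harmless since the lemma will be applied only for fixed values of $s$.
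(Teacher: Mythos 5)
Your proof is correct and follows essentially the same route as the paper: split the partial sum into the complete Dirichlet series minus the tail, identify the complete series as $\zeta(s)/\zeta(2s)$, and bound the tail by comparison with $\int_{x}^{\infty} u^{-s}\,du$. You derive the closed form via the convolution identity $\mu(n)^2=\sum_{d^2\mid n}\mu(d)$ and mention the Euler product as an alternative, whereas the paper uses the Euler product directly; along the way you also quietly correct two slips in the source (the hypothesis should read $x\ge 1$, and the decomposition must carry a minus sign in front of the tail, not a plus).
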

	\begin{proof}
		We can write 
		\begin{equation} \label{sum_mobius}
			\sum_{n \le x} \frac{\mu(n)^{2}}{n^{2}}
			= \sum_{n=1}^{\infty} \frac{\mu(n)^{2}}{n^{2}}
			+ \sum_{n > x} \frac{\mu(n)^{2}}{n^{2}}.
		\end{equation}
		From Euler Product Formula for the zeta function, we have
		\begin{align*}
			\sum_{n=1}^{\infty} \frac{\mu(n)^{2}}{n^{2}}
			& = \zeta(s) \prod_{p : prime} \bigg( 1+ \frac{\mu(p)^{2}}{p^{s}} \bigg) \bigg( 1- \frac{1}{p^{s}} \bigg) \\
			& = \zeta(s) \prod_{p : prime} \bigg( 1 - \frac{1}{p^{2s}} \bigg) 
			= \frac{\zeta(s)}{\zeta(2s)}.
		\end{align*}
		The second term on the right-hand side
	of (\ref{sum_mobius}) is
		\[
			\sum_{n > x} \frac{\mu(n)^{2}}{n^{2}}
			\le \int_{x}^{\infty} \frac{1}{u^{s}} du 
			\ll x^{-s+1}.
		\]
		Substituting these formulas into (\ref{sum_mobius}) leads to the lemma.
	\end{proof}
	
	We first estimate the sum of the square-full numbers in the short intervals. 
	
	\begin{Lem} \label{Lem_5}
        Let $B$ be a real number greater than or equal to $1$. 
		For real numbers $X$, $H \ge 2$, we have
		\[
			\sum_{\substack{X < f \le X+H \\ f \in \mathscr{Q}_{B}}} 1 
			\ll HX^{-\frac{1}{2}}B + B.
		\]
	\end{Lem}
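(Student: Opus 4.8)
The plan is to unfold the definition of $\mathscr{Q}_{B}$ and reduce everything to an elementary divisor-type count. Every $f \in \mathscr{Q}_{B}$ with $X < f \le X+H$ can be written $f = a^{2}b^{3}$ with $b \le B$ square-free and $a \ge 1$, so I would first bound
\[
\sum_{\substack{X < f \le X+H \\ f \in \mathscr{Q}_{B}}} 1 \le \sum_{b \le B} \#\Bigl\{ a \in \mathbb{N} : X < a^{2}b^{3} \le X+H \Bigr\} = \sum_{b \le B} \#\Bigl\{ a \in \mathbb{N} : \tfrac{\sqrt{X}}{b^{3/2}} < a \le \tfrac{\sqrt{X+H}}{b^{3/2}} \Bigr\}.
\]

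Next, for each fixed $b$ I would estimate the inner count by the length of the interval plus one: the number of integers in a half-open interval $(u,v]$ is at most $v-u+1$, and here
\[
v - u = \frac{\sqrt{X+H}-\sqrt{X}}{b^{3/2}} = \frac{H}{b^{3/2}\bigl(\sqrt{X+H}+\sqrt{X}\bigr)} \le \frac{H X^{-1/2}}{b^{3/2}}.
\]
Hence the inner count is $\le H X^{-1/2} b^{-3/2} + 1$. Summing over $b \le B$ and using $\sum_{b\ge1} b^{-3/2} = \zeta(3/2) < \infty$ gives
\[
\sum_{\substack{X < f \le X+H \\ f \in \mathscr{Q}_{B}}} 1 \ll H X^{-1/2} + B,
\]
and since $B \ge 1$ this is $\le H X^{-1/2} B + B$, as required. (In fact this already yields the slightly stronger bound $\ll H X^{-1/2}+B$.)

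Alternatively, and in a way that reproduces the stated shape $H X^{-1/2}B + B$ directly from the earlier lemmas, I could cover $(X,X+H]$ by the square-blocks $[n^{2},(n+1)^{2})$ for $\lfloor\sqrt{X}\rfloor \le n \le \sqrt{X+H}$; there are $\ll (\sqrt{X+H}-\sqrt{X})+1 \ll H X^{-1/2}+1$ of them, each contributing $\ll B$ elements of $\mathscr{Q}_{B}$ by Lemma~\ref{Lem_B}, and the product of these two bounds is exactly $\ll (HX^{-1/2}+1)B = H X^{-1/2}B + B$.

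I do not expect a genuine obstacle here. The only points needing a little care are: the $+1$ per value of $b$ arising from integer rounding --- this is precisely what generates the additive term $B$; the elementary bound $\sqrt{X+H}-\sqrt{X}\le H X^{-1/2}$; and, if one prefers not to assume $H\le X$ (which is needed to invoke Lemma~\ref{Lem_B}), simply noting that for $H\ge X$ the trivial count of square-full numbers up to $X+H$, which is $\ll (X+H)^{1/2}\ll H^{1/2}\le H X^{-1/2}B$, already lies within the claimed bound. The direct count above, however, requires no such case distinction.
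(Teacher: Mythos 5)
Your primary argument is correct and follows a genuinely different, more elementary route than the paper. The paper proves Lemma~\ref{Lem_5} by covering $(X,X+H]$ with the blocks $[n^{2},(n+1)^{2})$ and invoking Lemma~\ref{Lem_B} to bound the number of elements of $\mathscr{Q}_{B}$ in each block by $O(B)$, then counting the $O(HX^{-1/2}+1)$ relevant blocks --- exactly your ``alternative'' argument. Your main argument instead unfolds $\mathscr{Q}_{B}$ directly: for each of the at most $B$ values of $b$ you count the integers $a$ with $X<a^{2}b^{3}\le X+H$, using the mean-value bound $\sqrt{X+H}-\sqrt{X}\le HX^{-1/2}$ and the convergence of $\sum b^{-3/2}$. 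This bypasses Lemma~\ref{Lem_B} entirely, produces the sharper estimate $\ll HX^{-1/2}+B$ (the factor $B$ on the first term is unnecessary), and, as you observe, does not require the auxiliary hypothesis $H\le X$ that the paper's route inherits from Lemma~\ref{Lem_B}. The paper's approach has the modest advantage of reusing Lemma~\ref{Lem_B}, which is needed elsewhere anyway, so it keeps the argument modular; your direct count is cleaner if this lemma were the only goal.
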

	
	\begin{proof}
		We write the left-hand side as
		\[
			\sum_{\substack{X < f \le X+H \\ f \in \mathscr{Q}_{B}}} 1 
			= \sum_{X < n^{2} \le X+H} \sum_{\substack{n^{2} \le f < (n+1)^{2} \\ f \le X+H \\ f \in \mathscr{Q}_{B}}} 1.
		\]
		By Lemma \ref{Lem_B}, we have
		\begin{align*}
			\sum_{\substack{X < f \le X+H \\ f \in \mathscr{Q}_{B}}} 1 
			&\ll B \sum_{X < n^{2} \le X+H} 1 \\
			& \le B( (X+H)^{\frac{1}{2}} - X^{\frac{1}{2}}) + 1 \\
			& = B \bigg( \frac{1}{2} \int_{X}^{X+H} u^{-\frac{1}{2}} du + 1 \bigg)
			\le HX^{-\frac{1}{2}}B +B.
		\end{align*}
		This completes the proof.
	\end{proof}
		
	Let 
	\[
		S(X) \coloneqq \sum_{\substack{f \le X \\ f \in \mathscr{Q}_{B}}} (X-f).
	\]
	
	\begin{Lem} \label{Lem_7}
		For real numbers $X, H$ with $4 \le H \le X$, we have
		\[
			S(X+H) - S(X)
			= \frac{\zeta(\frac{3}{2})}{\zeta(3)} HX^{\frac{1}{2}} + O(HB).
		\]
	\end{Lem}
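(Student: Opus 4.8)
The plan is to compute the difference $S(X+H)-S(X)$ directly as a short integral of the counting function of $\mathscr{Q}_{B}$, rather than by writing down an asymptotic formula for $S(X+H)$ and for $S(X)$ separately and subtracting. The latter route is hopeless here: truncating the cube part at $b\le B$ forces each of $S(X+H)$ and $S(X)$ to carry an error $\asymp X^{3/2}B^{-1/2}$, which already dwarfs the target main term $\asymp HX^{1/2}$ once the interval is genuinely short, so all information is lost on subtraction. Setting $\mathcal{Q}_{B}(u)\coloneqq\#\{f\le u:f\in\mathscr{Q}_{B}\}$ and interchanging summation and integration in $S(Y)=\sum_{f\le Y}(Y-f)=\sum_{f\le Y}\int_{f}^{Y}du$, one gets $S(Y)=\int_{0}^{Y}\mathcal{Q}_{B}(u)\,du$, hence
\[
	S(X+H)-S(X)=\int_{X}^{X+H}\mathcal{Q}_{B}(t)\,dt .
\]

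The next step is to estimate $\mathcal{Q}_{B}(t)$ uniformly for $t\in[X,X+H]$ (so $t\le 2X$, using $H\le X$). Every square-full number has a unique representation $f=a^{2}b^{3}$ with $b$ square-free, so $(a,b)\mapsto a^{2}b^{3}$ (with $a\ge 1$ and $1\le b\le B$ square-free) is a bijection onto $\mathscr{Q}_{B}$, and therefore
\[
	\mathcal{Q}_{B}(t)=\sum_{\substack{b\le B\\ b\text{ square-free}}}\big\lfloor\sqrt{t/b^{3}}\,\big\rfloor=\sqrt{t}\sum_{\substack{b\le B\\ b\text{ square-free}}}\frac{1}{b^{3/2}}+O(B).
\]
Now Lemma \ref{Lem_sum_mobius} with $s=\tfrac{3}{2}$ (that is, $\sum_{b\le B}\mu(b)^{2}b^{-3/2}=\zeta(\tfrac{3}{2})/\zeta(3)+O(B^{-1/2})$) gives, for $X\le t\le X+H$,
\[
	\mathcal{Q}_{B}(t)=\frac{\zeta(\tfrac{3}{2})}{\zeta(3)}\sqrt{t}+O\!\big(\sqrt{X}\,B^{-1/2}+B\big).
\]

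Integrating over $[X,X+H]$, and using the mean value theorem (and $H\le X$) to write $\tfrac{2}{3}\big((X+H)^{3/2}-X^{3/2}\big)=HX^{1/2}+O(H^{2}X^{-1/2})$, I obtain
\[
	S(X+H)-S(X)=\frac{\zeta(\tfrac{3}{2})}{\zeta(3)}HX^{1/2}+O\!\big(HB+H\sqrt{X}\,B^{-1/2}+H^{2}X^{-1/2}\big).
\]
All three error terms are of smaller order than the main term under the hypotheses $4\le H\le X$, $B\ge 1$, and collecting them gives the claimed formula.

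The only step that requires genuine care is the estimate for $\mathcal{Q}_{B}(t)$, and in particular the truncation of the $b$-sum at $B$: this truncation is exactly what produces the $O(B^{-1/2})$ term in Lemma \ref{Lem_sum_mobius}, hence the $O(H\sqrt{X}\,B^{-1/2})$ contribution, and it must be accounted for separately rather than hidden inside the $O(B)$ coming from the floor functions. The conceptual point, stressed at the outset, is that one must form the difference $S(X+H)-S(X)$ \emph{before} approximating; estimating $S(X+H)$ and $S(X)$ individually and subtracting destroys the main term. With those two observations in hand, everything else --- opening the floor functions, the mean value estimate for $(X+H)^{3/2}-X^{3/2}$, and the collection of error terms --- is routine.
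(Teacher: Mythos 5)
Your proof is built on reading $S(Y)$ as $\sum_{f\le Y,\,f\in\mathscr{Q}_B}(Y-f)$, so that $S(Y)=\int_0^Y\mathcal{Q}_B(u)\,du$ and the difference becomes a short integral. That is not quite the definition the paper actually uses: the later (operative) definition is
\[
	S_\alpha(Q)=\frac{1}{\alpha}\sum_{\substack{f\le X\\ f\in\mathscr{Q}_B}}(Q-f)^\alpha,\qquad S(Q)=S_1(Q),
\]
with the sum cut off at $X$ regardless of $Q$ (this is what Lemma~\ref{Lem_22} requires). Under that reading one has at once
\[
	S(X+H)-S(X)=H\sum_{\substack{f\le X\\ f\in\mathscr{Q}_B}}1=H\sum_{b\le B}\mu(b)^2\Big\lfloor\sqrt{X/b^3}\,\Big\rfloor,
\]
with no integral, no mean-value estimate for $(X+H)^{3/2}-X^{3/2}$, and no $H^2X^{-1/2}$ term. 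Your preliminary discussion of why ``estimating $S(X+H)$ and $S(X)$ separately is hopeless,'' and the integral representation $\int_X^{X+H}\mathcal{Q}_B$, are therefore solving a problem the paper does not have; they also silently drop the contribution $\sum_{X<f\le X+H}(X+H-f)$ that your reading of $S$ actually introduces. Once the count $\#\{f\le X:f\in\mathscr{Q}_B\}$ is isolated, the two arguments coincide: open the floor functions to get $X^{1/2}\sum_{b\le B}\mu(b)^2 b^{-3/2}+O(B)$ and apply Lemma~\ref{Lem_sum_mobius} with $s=\tfrac32$.

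There is also a genuine gap at the end. You arrive at the error $O\big(HB+H\sqrt{X}B^{-1/2}+H^2X^{-1/2}\big)$ and then assert that ``collecting them gives the claimed formula.'' Being smaller than the main term $HX^{1/2}$ does not make them $O(HB)$: in particular $H\sqrt{X}B^{-1/2}\ll HB$ only when $B\gg X^{1/3}$, whereas the application takes $B=(\log X)^{4A}$, for which this term dominates $HB$ by a large margin. So the proposal does not in fact establish the stated error bound, and you should not have claimed it does. (For what it is worth, the paper's own one-line proof produces the same $O\big(HX^{1/2}B^{-1/2}+HB\big)$, and the application in \eqref{M_result} records the error as $O(HX^{1/2}B^{-1})$; so the $O(HB)$ in the statement of Lemma~\ref{Lem_7} appears to be misstated. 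The correct reaction is to flag the discrepancy, not to wave the term into $O(HB)$.)
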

	\begin{proof}
		From the definition of $\mathscr{Q}_{B}$, we have
		\begin{align*}
			S(X+H) - S(X)
			&= H \sum_{\substack{a^{2}b^{3} \le X \\ b < B}} \mu(b)^{2} \\
			&= H \sum_{b<B} \mu(b)^{2} \sum_{a^{2} \le \frac{X}{b^{3}}} 1 \\
			&= H \bigg( X^{\frac{1}{2}} \sum_{b < B} \frac{\mu(b)^{2}}{b^{\frac{3}{2}}} + O (B) \bigg). 
		\end{align*}
		Using Lemma \ref{Lem_sum_mobius}, we obtain the lemma.
	\end{proof}
	
	We second consider the Chebyshev functions $\psi(x)$ and $\vartheta(x)$. 
	Let $\Lambda(m)$ be the von Mangoldt function. 
	Chebyshev functions are defined as
	\[
		\psi(x) \coloneqq \sum_{m \le x} \Lambda(m), 
		\quad
		\vartheta(x) \coloneqq \sum_{\substack{p : \textup{prime} \\ p \le x}} \log p.
	\]
	We denote non-trivial zeros of the Riemann zeta function as $\rho = \beta + i\gamma$ with the real part $\beta$ and the imaginary part $\gamma$.
	Let $N(\alpha, T)$ be the number of non-trivial zeros of the Riemann zeta function in the range of $\alpha \le \beta \le 1$ and $|\gamma| \le T$. 
		
	\begin{Lem} [Suzuki~\cite{Suzuki}] \label{Lem_9}
		Assume that real numbers $X, T$ satisfy $2 \le T \le 2X$. Then for $0 \le x \le X$, we have
		\begin{equation} \label{assym_psi}
			\psi(x) 
			= x 
			- \sum_{\substack{\rho \\ | \gamma | \le T}} \frac{x^{\rho}}{\rho} 
			+ O( XT^{-1}(\log X)^{2}).
		\end{equation}
	\end{Lem}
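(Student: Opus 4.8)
The estimate \eqref{assym_psi} is the classical Riemann--von Mangoldt explicit formula specialised to the ranges $0\le x\le X$ and $2\le T\le 2X$, and the plan is to obtain it by truncated Perron's formula followed by a contour shift. The guiding observation is that the admissible error $XT^{-1}(\log X)^{2}$ is always $\gg(\log X)^{2}\gg\log X$, because $T\le 2X$, so it will absorb every secondary term and every loss of uniformity. I would first dispatch the range $0\le x<2$, where $\psi(x)=0$: since $x^{\beta}\le 2$ for every nontrivial zero $\rho=\beta+i\gamma$ and $\sum_{|\gamma|\le T}|\rho|^{-1}\ll(\log T)^{2}$, one has $\bigl|\,x-\sum_{|\gamma|\le T}x^{\rho}/\rho\,\bigr|\ll(\log X)^{2}\ll XT^{-1}(\log X)^{2}$, and \eqref{assym_psi} holds there with room to spare.

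For $2\le x\le X$, I would first fix a height $T^{*}\in[T,T+1]$ whose distance to the ordinates of the nontrivial zeros of $\zeta$ is $\gg 1/\log T$; such a $T^{*}$ exists because only $O(\log T)$ such ordinates lie in $[T,T+1]$, and at the end of the argument replacing $T^{*}$ by $T$ in the sum over zeros will cost only $\ll xT^{-1}\log T$, each of the $O(\log T)$ zeros with $T<|\gamma|\le T^{*}$ contributing $\ll x^{\beta}/|\rho|\ll x/T$. Applying the effective truncated Perron formula to $\psi(x)=\sum_{m\le x}\Lambda(m)$ with abscissa $c=1+1/\log x$ and height $T^{*}$, and carrying out the standard estimates, yields
\[
	\psi(x)=\frac{1}{2\pi i}\int_{c-iT^{*}}^{c+iT^{*}}\left(-\frac{\zeta'}{\zeta}(s)\right)\frac{x^{s}}{s}\,ds+O\!\left(\frac{x(\log x)^{2}}{T}+\log x\right),
\]
where for a prime power $x$ the left side is to be read as $\tfrac12(\psi(x^{-})+\psi(x^{+}))$, which differs from $\psi(x)$ by at most $\tfrac12\Lambda(x)\le\log X$. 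I would then move the contour to the line $\Re s=-1$. Inside the rectangle the integrand has poles only at $s=1$ (residue $x$), at the zeros $\rho$ with $|\gamma|<T^{*}$ (residues $-x^{\rho}/\rho$), and at $s=0$ (residue $-\zeta'/\zeta(0)=-\log 2\pi$); the trivial zeros begin at $s=-2$, so none of them is crossed. On the two horizontal sides the choice of $T^{*}$ gives $\zeta'/\zeta(\sigma+iT^{*})\ll(\log T)^{2}$ uniformly for $-1\le\sigma\le c$, so each side contributes $\ll x(\log T)^{2}T^{-1}$; on the left side $\zeta'/\zeta(-1+it)\ll\log(|t|+2)$ gives a contribution $\ll x^{-1}(\log T)^{2}\ll(\log X)^{2}$.

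Collecting the residues and the contour bounds, using $x\le X$ together with $T\le 2X$ so that $\log(xT)\ll\log X$ and each of $O(\log x)$, $O(1)$, $O(x^{-1}(\log X)^{2})$ and $O(x(\log X)^{2}T^{-1})$ is $\ll XT^{-1}(\log X)^{2}$, I would arrive at \eqref{assym_psi} for all $0\le x\le X$; the condition $|\gamma|<T^{*}$ in the residue sum turns into the asserted $|\gamma|\le T$ after the cheap swap noted above. I do not expect a genuine obstacle, since the heart of the matter is the textbook explicit formula; the two points that really need attention are the passage to the zero-avoiding height $T^{*}$, which is precisely what allows the horizontal integrals and the line $\Re s=-1$ to be estimated for an \emph{unrestricted} $T\in[2,2X]$, and the bookkeeping that confirms the hypotheses $x\le X$ and $T\le 2X$ genuinely force $XT^{-1}(\log X)^{2}$ to dominate every error that appears — in particular the jump of $\psi$ at a prime power and the residue at $s=0$, each of size only $O(\log X)$.
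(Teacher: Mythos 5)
The paper cites this lemma from Suzuki without reproducing a proof, so there is no internal argument to compare against. Your derivation is the standard truncated--Perron plus contour--shift proof of the explicit formula for $\psi$, correctly adapted to the stated ranges: you dispose of $0\le x<2$ directly, pick a zero-avoiding height $T^{*}\in[T,T+1]$, shift to $\Re s=-1$, and check that the horizontal and vertical contour contributions, the jump of $\psi$ at a prime power, the residue $-\zeta'/\zeta(0)$ at $s=0$, and the cost of swapping $T^{*}$ for $T$ in the zero sum are each $\ll XT^{-1}(\log X)^{2}$ once $x\le X$ and $T\le 2X$; the argument is sound and complete.
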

	
	\begin{Lem} [{\cite[Theorem 6.1, p. 143]{Ivic}}]{\textup{(The Korobov--Vinogradov zero-free region)}} \label{Lem_zerofree}
		For $s = \sigma +it$, $\tau = |t| +4$, $ \sigma > 1 - c_{0}(\log \tau)^{-\frac{2}{3}} (\log \log \tau)^{-\frac{1}{3}}$, we have $\zeta(s) \ne 0$, where $c_{0} > 0$ is some absolute constant. 
	\end{Lem}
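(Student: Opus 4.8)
The plan is to derive the zero-free region from two ingredients: a Vinogradov--Korobov growth bound for $\zeta(s)$ in a thin strip to the left of $\sigma = 1$, and the classical zero-detection argument resting on the inequality $3 + 4\cos\theta + \cos 2\theta \ge 0$. I restrict throughout to $|t| \ge t_0$ for a suitable absolute constant $t_0$; for $|t| < t_0$ the asserted region, after shrinking $c_0$, is just a fixed neighbourhood of $s = 1$, where $\zeta$ has a pole (not a zero) at $s = 1$ and non-vanishing follows from the classical de la Vall\'ee Poussin argument.

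\textbf{Step 1 (the main obstacle).} First I would establish that there is an absolute constant $A > 0$ such that, for $0 < \lambda \le 1/2$,
\[
	\zeta(\sigma + it) \ll |t|^{A\lambda^{3/2}}(\log|t|)^{2/3} \qquad (1 - \lambda \le \sigma \le 2,\ |t| \ge t_0).
\]
This is the deep step, and essentially the only hard one. It rests on estimating the exponential sums $\sum_{N < n \le 2N} n^{-it}$ with $N$ as large as a fixed power of $|t|$: Vinogradov's method --- iterated Weyl differencing, equivalently the Vinogradov mean value theorem bounding $\int_{[0,1]^k} \bigl|\sum_{n \le P} e^{2\pi i(x_1 n + \cdots + x_k n^k)}\bigr|^{2s}\,dx_1\cdots dx_k$, with the degree $k$ chosen optimally as a suitable power of $\log|t|$ --- gives $\sum_{N < n \le 2N} n^{-it} \ll N\exp\bigl(-c(\log N)^3/(\log|t|)^2\bigr)$ in the relevant range. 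Inserting this, after dyadic decomposition and partial summation, into the approximate formula $\zeta(s) = \sum_{n \le |t|^C} n^{-s} + O(1)$ obtained from Euler--Maclaurin summation yields the displayed bound; in particular $|\zeta(\sigma + it)| \ll (\log|t|)^{2/3}$ whenever $\lambda \le c_1(\log|t|)^{-2/3}$.

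\textbf{Step 2 (local control of $\zeta'/\zeta$).} Fix $|t| \ge t_0$, keep $\lambda \in (0,1/2]$ as a parameter to be optimized later, and set $z_0 := 1 + \tfrac{\lambda}{2} + it$. Then $|\zeta(z_0)| \ge \zeta(2\sigma_0)/\zeta(\sigma_0) \gg \lambda$ with $\sigma_0 := 1 + \lambda/2$, while by Step 1 $|\zeta(z)| \ll |t|^{A\lambda^{3/2}}(\log|t|)^{2/3}$ throughout the disc $|z - z_0| \le \lambda$. The Borel--Carath\'eodory theorem and Cauchy's estimate --- in the standard form that absorbs the zeros of $\zeta$ inside the disc into a local Blaschke product --- then give, for $s$ in a concentric disc of radius $\asymp \lambda$,
\[
	-\operatorname{Re}\frac{\zeta'}{\zeta}(s) = -\sum_{\rho}\operatorname{Re}\frac{1}{s - \rho} + O(E), \qquad E := \lambda^{1/2}\log|t| + \lambda^{-1}\log\log|t|,
\]
where $\rho$ runs over the zeros with $|\rho - z_0| \le \lambda$ and the term $\lambda^{-1}\log\log|t|$ in $E$ accounts for the loss $\log|\zeta(z_0)|^{-1} \asymp \log(1/\lambda)$. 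Since every summand has nonnegative real part, retaining only the term of a single prescribed zero bounds the left-hand side from above, and discarding the whole sum bounds it from below.

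\textbf{Step 3 (the $3$-$4$-$1$ trick and optimization).} Let $\rho_0 = \beta_0 + i\gamma$ be any nontrivial zero, say $\gamma > 0$ large, and set $\eta := 1 - \beta_0$. Applying Step 2 with centres $1 + \tfrac{\lambda}{2} + i\gamma$ and $1 + \tfrac{\lambda}{2} + 2i\gamma$, and taking $\sigma = 1 + \delta$ with $0 < \delta \le \lambda/4$, gives $-\operatorname{Re}\frac{\zeta'}{\zeta}(\sigma + i\gamma) \le -\frac{1}{\delta + \eta} + O(E)$ (keeping the $\rho_0$-term), $-\operatorname{Re}\frac{\zeta'}{\zeta}(\sigma + 2i\gamma) \le O(E)$, and $-\frac{\zeta'}{\zeta}(\sigma) = \frac{1}{\delta} + O(1)$. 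Because $-\operatorname{Re}\frac{\zeta'}{\zeta}(\sigma + iu) = \sum_n \Lambda(n) n^{-\sigma}\cos(u\log n)$, the nonnegativity of $3 + 4\cos\theta + \cos 2\theta$ forces $\frac{4}{\delta + \eta} \le \frac{3}{\delta} + O(E)$; a routine manipulation (if $\eta < \delta/4$ the left side exceeds $\tfrac{1}{8\delta}$, hence $\delta \gg 1/E$) shows that choosing $\delta \asymp 1/E$ --- which lies in $(0, \lambda/4]$ as long as $\lambda$ is not too small --- forces $\eta \gg 1/E$. Minimizing $E = \lambda^{1/2}\log|t| + \lambda^{-1}\log\log|t|$ over $\lambda$, the optimum is at $\lambda \asymp \bigl((\log\log|t|)/\log|t|\bigr)^{2/3}$, where $E \asymp (\log|t|)^{2/3}(\log\log|t|)^{1/3}$, so $1 - \beta_0 = \eta \gg (\log|t|)^{-2/3}(\log\log|t|)^{-1/3}$. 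Rewriting $|t|$ as $\tau = |t| + 4$ and folding the range $|t| < t_0$ into the constant $c_0$ completes the proof; only Step 1 carries any real difficulty, Steps 2 and 3 being the textbook passage from a growth bound to a zero-free region.
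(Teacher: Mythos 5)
The paper itself does not prove this lemma: it is stated with a citation to Ivi\'{c}, Theorem 6.1, and used as a black box, so your proposal is not in competition with any argument in the text. That said, your sketch is the standard textbook route --- indeed essentially the one Ivi\'{c}'s book develops --- and its logic is sound. Vinogradov's method gives the growth bound $\zeta(\sigma+it) \ll |t|^{A(1-\sigma)^{3/2}}(\log|t|)^{2/3}$ just left of the $1$-line; the Hadamard--Borel--Carath\'eodory machinery converts this, on a disc of radius $\asymp \lambda$ about $1+\tfrac{\lambda}{2}+it$, into $-\operatorname{Re}\,(\zeta'/\zeta)(s) \le -\sum_{\rho}\operatorname{Re}\,(s-\rho)^{-1}+O(E)$ with $E \asymp \lambda^{1/2}\log|t| + \lambda^{-1}\log\log|t|$; and the $3+4\cos\theta+\cos 2\theta\ge 0$ inequality then forces $1-\beta_{0}\gg 1/E$, which upon minimizing $E$ at $\lambda \asymp \bigl((\log\log|t|)/\log|t|\bigr)^{2/3}$ is exactly the Korobov--Vinogradov width. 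Your Steps 2 and 3 reproduce this correctly, including the optimization. The one substantive remark is that Step 1 is written as a description rather than a proof --- the Vinogradov mean value input and its consequence for $\sum_{N<n\le 2N} n^{-it}$ are asserted, not derived --- and since that is where all the depth lies, a full verification would have to fill it in; as an outline of where the difficulty sits it is accurate and introduces no logical gap. A minor imprecision: you attribute the $\lambda^{-1}\log\log|t|$ term in $E$ solely to the loss $\log|\zeta(z_{0})|^{-1}\asymp\log(1/\lambda)$, whereas the $(\log|t|)^{2/3}$ factor in the Step 1 bound also contributes $\asymp\lambda^{-1}\log\log|t|$ through Borel--Carath\'eodory; at the optimal $\lambda$ the two contributions are of the same order, so the stated $E$ and the resulting exponent are unaffected.
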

	
	\begin{Lem} [Ingham~\cite{Ingham} and Huxley~\cite{H}, {\cite[p. 275]{Ivic}}]\label{Lem_I-H}
		For real numbers $\alpha, T$ with $\frac{1}{2} \le \alpha \le 1$ and $T \ge 2$, we have
		\[
			N(\alpha, T) \ll T^{c(\alpha)}(\log T)^{A}, 
			\quad
			c(\alpha) \coloneqq
			\left\{ \,
				\begin{aligned}
					& \frac{3(1-\alpha)}{3\alpha-1} & (\frac{3}{4} \le \alpha \le 1), \\
					& \frac{3(1-\alpha)}{2-\alpha} & (\frac{1}{2} \le \alpha \le \frac{3}{4}), 
				\end{aligned}
			\right.
		\]
		where $A$ is a constant.
	\end{Lem}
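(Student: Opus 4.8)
This is a classical zero-density estimate, and the plan is to reprove it by the mollification (zero-detection) method, the main analytic inputs being the classical second and fourth power moments of $\zeta$ on the critical line, mean-value and large-value estimates for Dirichlet polynomials, and the approximate functional equation for $\zeta$. Fix $T$ large. Since any interval $(t,t+1]$ contains $\ll\log T$ ordinates of zeros and the estimate is monotone in $T$, a dyadic decomposition of the height reduces us to bounding the number $R$ of nontrivial zeros $\rho=\beta+i\gamma$ with $\beta\ge\alpha$ and $T<\gamma\le 2T$ whose ordinates are pairwise at distance $\ge 1$. The device is the truncated inverse-zeta series $M_Y(s)=\sum_{n\le Y}\mu(n)\,n^{-s}$ with $Y$ a free parameter, $2\le Y\le T$. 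From $\zeta(s)M_Y(s)=1+\sum_{n>Y}a_Y(n)\,n^{-s}$ for $\Re s>1$, where $a_Y(n)=\sum_{d\mid n,\,d\le Y}\mu(d)\ll d(n)$, a smooth truncation by the Mellin transform of $e^{-u}$ followed by a contour shift to the critical line shows that, with $x$ a suitable power of $Y$ and apart from the $O((\log T)^2)$ zeros near the real axis, every such zero satisfies
\[
1\ll\Bigl|\sum_{Y<n\le x^{1+\varepsilon}}a_Y(n)\,n^{-\rho}e^{-n/x}\Bigr|+x^{\frac12-\alpha}\int_{-\infty}^{\infty}\bigl|\zeta(\tfrac12+i(\gamma+v))\,M_Y(\tfrac12+i(\gamma+v))\bigr|\,e^{-|v|}\,dv .
\]

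Accordingly the zeros fall into two families. When the integral term dominates, I would square, apply Cauchy--Schwarz in $v$, sum over the $1$-separated ordinates (whose short windows overlap $\ll\log T$ times) and invoke the mollified second-moment estimate $\int_0^T|\zeta(\tfrac12+it)M_Y(\tfrac12+it)|^2\,dt\ll T(\log T)^{O(1)}$ in the relevant range of $Y$ (itself resting on the approximate functional equation and the fourth moment $\int_0^T|\zeta(\tfrac12+it)|^4\,dt\ll T(\log T)^4$); this family turns out never to be the bottleneck. When the Dirichlet polynomial dominates, I would split $(Y,x^{1+\varepsilon}]$ dyadically into intervals $(N,2N]$ and, for each, count the zeros at which $|\sum_{N<n\le 2N}a_Y(n)\,n^{-\rho}|\gg(\log T)^{-1}$; here the Hal\'asz--Montgomery large-values inequality applied to this Dirichlet polynomial over the well-spaced ordinates, together with the reflected polynomial supplied by the functional equation for $\zeta$ (which trades the long range of summation, up to $n\asymp x^{1+\varepsilon}$, for a short one) and once more the fourth moment, gives the count. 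Summing over the $O(\log T)$ dyadic values of $N$ and combining the two families, one arrives at a bound for $R$ that is a sum of two terms, one increasing and one decreasing in $Y$.

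Optimizing the mollifier length $Y$ over $[2,T]$ then produces the result. Balancing the two terms gives the Ingham bound $N(\alpha,T)\ll T^{3(1-\alpha)/(2-\alpha)}(\log T)^{A}$, valid for all $\tfrac12\le\alpha\le1$. For $\tfrac34\le\alpha\le1$ one can do better: an extra iteration of the Hal\'asz--Montgomery step --- Huxley's refinement, in which the set of large values is subdivided according to the size of a short sub-polynomial and the inequality is fed back into itself --- improves the dependence on $Y$, and re-optimizing gives $N(\alpha,T)\ll T^{3(1-\alpha)/(3\alpha-1)}(\log T)^{A}$, which beats the Ingham exponent exactly on $[\tfrac34,1]$ (the crossover at $\tfrac34$ being where $2-\alpha=3\alpha-1$). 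Taking the better of the two bounds in each range, summing over the $O(\log T)$ dyadic heights, and collecting all the logarithmic factors into a single power $(\log T)^{A}$ yields the stated estimate.

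The routine ingredients are the contour shift in the zero-detection step, the well-spacing of the zeros, the Riemann--von Mangoldt bound on the number of zeros in a unit interval, and the dyadic bookkeeping. The crux --- and the reason the result is not elementary --- is the large-values analysis together with the final optimization: to reach the exponents $c(\alpha)$ as stated, rather than the weaker ones that the bare mean value theorem for Dirichlet polynomials would give, one needs the Hal\'asz--Montgomery method, the use of the functional equation to replace the long ``error'' Dirichlet polynomial by a short one, and, in the range $\tfrac34\le\alpha\le1$, Huxley's combinatorial iteration of the large-values count.
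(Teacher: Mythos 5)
The paper gives no proof of this lemma: it is quoted verbatim from Ivi\'c's book (p.~275), where it is attributed to Ingham and Huxley, so there is no authorial argument to compare your sketch against. As a stand-alone outline of the classical zero-detection proof, your sketch is sound: the mollifier $M_Y(s)=\sum_{n\le Y}\mu(n)n^{-s}$, the smooth Mellin truncation and contour shift producing the detection inequality, the dichotomy between the Dirichlet-polynomial-dominated and integral-dominated zeros, the mollified second moment (resting ultimately on the fourth moment of $\zeta$) for the latter, the Hal\'asz--Montgomery large-values bound combined with the reflection from the functional equation for the former, Huxley's iterative subdivision to sharpen the exponent on $[\tfrac34,1]$, and the observation that the crossover $2-\alpha=3\alpha-1$ occurs at $\alpha=\tfrac34$ are all the right ingredients in the right places. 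The only caveat is that this is a sketch that compresses a substantial amount of work --- in particular the precise form of the large-values estimate you need, the bookkeeping of the well-spaced ordinates and the logarithmic losses, and the exact mechanism of Huxley's refinement --- into single sentences; at the level of detail the paper implicitly expects (namely, a citation), this is more than adequate, but it would need considerable expansion to stand as a complete proof.
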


 	 \begin{Rem}
 		 Recently this result has improved by L. Guth and J. Maynard \cite{G-M}. 
		 Using their results does not affect our main theorem for now. 
	\end{Rem}

	In the following, we evaluate several sums over non-trivial zeros of the Riemann zeta function. 
	
	\begin{Lem} [Suzuki~\cite{Suzuki}] \label{Lem_13}
		For real numbers $K, Y$ with $1 \le K \le Y \le X^{2}$, we have
		\[
			\sum_{\substack{\rho \\ K \le | \gamma | \le 2K}} Y^{\beta} 
			\ll ( Y^{\phi(\lambda)} + Y^{1-\eta + 2\eta \lambda}) (\log X)^{A},
		\]
		where
		\begin{align}
			\phi(\lambda) \coloneqq 
			\left\{ \,
				\begin{aligned}
					&  \frac{3}{5} \lambda + \frac{3}{4} & (0 \le \lambda \le \frac{25}{48}), \\
					& 3 \lambda + 2(1- \sqrt{3\lambda}) & (\frac{25}{48} \le \lambda \le \frac{3}{4}), \\
					& \lambda + \frac{1}{2} & (\frac{3}{4} \le \lambda \le 1),  
				\end{aligned}
			\right.
			\label{fun_phi} \\
			\eta = c_{1}(\log X)^{-\frac{2}{3}} (\log \log X)^{-\frac{1}{3}}, \quad
		\lambda = \frac{\log K}{\log Y}, \notag
		\end{align}
		and constants $A, c_{1} >0$.
	\end{Lem}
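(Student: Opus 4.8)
The plan is to reduce the sum over zeros, via the functional equation and partial summation, to an integral of $Y^{\alpha}$ against a zero‑counting function, and then to feed in the Korobov--Vinogradov zero‑free region (Lemma~\ref{Lem_zerofree}) and the Ingham--Huxley density estimate (Lemma~\ref{Lem_I-H}). Since the nontrivial zeros are symmetric under $\rho \mapsto 1-\bar\rho$ and $Y \ge 1$, one has
\[
	\sum_{\substack{\rho \\ K \le |\gamma| \le 2K}} Y^{\beta}
	\le 2 \sum_{\substack{\rho \\ K \le |\gamma| \le 2K \\ \beta \ge 1/2}} Y^{\beta},
\]
because a zero with $\beta < \tfrac12$ pairs with one of real part $1-\beta > \tfrac12$ and $Y^{\beta} < Y^{1-\beta}$. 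Writing $N(\alpha) := \#\{\rho : K \le |\gamma| \le 2K,\ \beta \ge \alpha\}$ and inserting $Y^{\beta} = Y^{1/2} + (\log Y)\int_{1/2}^{1} Y^{\alpha}\,\mathbbm{1}_{[\alpha \le \beta]}\,d\alpha$ for such $\rho$, summation gives
\[
	\sum_{\substack{\rho \\ K \le |\gamma| \le 2K \\ \beta \ge 1/2}} Y^{\beta}
	= Y^{1/2} N(\tfrac12) + (\log Y)\int_{1/2}^{1} Y^{\alpha} N(\alpha)\,d\alpha .
\]
By the Riemann--von Mangoldt formula $N(\tfrac12) \ll K\log K \ll Y^{\lambda}\log X$, so the first term is $\ll Y^{\lambda + 1/2}\log X$, which is admissible because an elementary piecewise check shows $\phi(\lambda) \ge \lambda + \tfrac12$ for all $\lambda \in [0,1]$ (with equality for $\lambda \ge \tfrac34$).

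For the integral, note $1 \le K \le Y \le X^{2}$, so Lemma~\ref{Lem_zerofree} (with $c_{1}$ chosen small enough) forces $N(\alpha) = 0$ for $\alpha > 1-\eta$; hence the integral runs only over $[\tfrac12, 1-\eta]$, and there Lemma~\ref{Lem_I-H} gives $N(\alpha) \le N(\alpha, 2K) \ll Y^{\lambda c(\alpha)}(\log X)^{A}$. Therefore
\[
	(\log Y)\int_{1/2}^{1} Y^{\alpha}N(\alpha)\,d\alpha
	\ll (\log X)^{A+1} \int_{1/2}^{1-\eta} Y^{\,g(\alpha)}\,d\alpha,
	\qquad g(\alpha) := \alpha + \lambda c(\alpha),
\]
and since $\int_{1/2}^{1-\eta} Y^{g(\alpha)}\,d\alpha \ll Y^{\max_{[1/2,\,1-\eta]} g}$, everything reduces to proving
\[
	\max_{1/2 \le \alpha \le 1-\eta} g(\alpha) \le \max\bigl\{\phi(\lambda),\ 1-\eta+2\eta\lambda\bigr\}.
\]

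This optimization is the heart of the matter and, I expect, the main obstacle; it is also where the exponents $\tfrac{25}{48}$ and $\tfrac34$ originate. On $[\tfrac34,1]$ one has $c(\alpha) = \tfrac{3(1-\alpha)}{3\alpha-1}$, which is convex, so $g$ is convex on $[\tfrac34, 1-\eta]$ and its maximum there is attained at an endpoint: $g(\tfrac34) = \tfrac34 + \tfrac35\lambda$ is tangent to the convex function $\phi$ at $\lambda = \tfrac{25}{48}$ and hence $\le \phi(\lambda)$ throughout $[0,1]$, while $g(1-\eta) = 1-\eta + \lambda\,\tfrac{3\eta}{2-3\eta} \le 1-\eta+2\eta\lambda$ once $\eta \le \tfrac16$ (true for $X$ large). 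On $[\tfrac12,\tfrac34]$ one has $c(\alpha) = \tfrac{3(1-\alpha)}{2-\alpha}$, which is concave, so $g$ is concave; solving $g'(\alpha) = 1 - \tfrac{3\lambda}{(2-\alpha)^{2}} = 0$ places the interior maximum at $\alpha^{\ast} = 2 - \sqrt{3\lambda}$. If $\tfrac{25}{48} \le \lambda \le \tfrac34$ then $\alpha^{\ast} \in [\tfrac12,\tfrac34]$ and $g(\alpha^{\ast}) = 3\lambda + 2 - 2\sqrt{3\lambda}$; if $\lambda < \tfrac{25}{48}$ then $g$ is increasing on $[\tfrac12,\tfrac34]$ with maximum $g(\tfrac34) = \tfrac35\lambda + \tfrac34$; and if $\lambda > \tfrac34$ then $g$ is decreasing with maximum $g(\tfrac12) = \lambda + \tfrac12$. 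These three cases reproduce exactly the three branches of $\phi$ in \eqref{fun_phi}, so $\max_{[1/2,\,1-\eta]} g \le \max\{\phi(\lambda),\ 1-\eta+2\eta\lambda\}$. Feeding this back yields $\int_{1/2}^{1-\eta} Y^{g(\alpha)}\,d\alpha \ll Y^{\phi(\lambda)} + Y^{1-\eta+2\eta\lambda}$, and collecting the powers of $\log X$ from the three contributions gives the stated bound (after relabelling $A$). The partial summation, the Riemann--von Mangoldt count, and the calibration of $\eta$ against the zero‑free region are routine; the piecewise extremal analysis of $g$, together with the bookkeeping of the secondary term $Y^{1-\eta+2\eta\lambda}$ coming from the edge $\alpha = 1-\eta$, is the only genuinely delicate point.
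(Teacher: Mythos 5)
The paper itself gives no proof of this lemma --- it is quoted verbatim from Suzuki~\cite{Suzuki} --- so there is nothing in the manuscript to compare against. Your reconstruction is nonetheless correct and follows what is surely the intended argument: symmetrize about $\sigma=\tfrac12$ using the functional equation, pass by partial summation (in the form $Y^{\beta}=Y^{1/2}+(\log Y)\int_{1/2}^{\beta}Y^{\alpha}\,d\alpha$) from $\sum Y^{\beta}$ to an integral of $Y^{\alpha}N(\alpha,2K)$, truncate the integral at $1-\eta$ via the Korobov--Vinogradov region (Lemma~\ref{Lem_zerofree}, with $c_{1}$ small), insert the Ingham--Huxley density bound (Lemma~\ref{Lem_I-H}), and optimize $g(\alpha)=\alpha+\lambda c(\alpha)$. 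Your optimization is accurate: on $[\tfrac12,\tfrac34]$ the map $c$ is concave with critical point $\alpha^{\ast}=2-\sqrt{3\lambda}$ giving $g(\alpha^{\ast})=3\lambda+2-2\sqrt{3\lambda}$; on $[\tfrac34,1-\eta]$ it is convex with endpoint values $\tfrac34+\tfrac35\lambda$ and $1-\eta+\lambda\,\tfrac{3\eta}{2-3\eta}\le 1-\eta+2\eta\lambda$; and the elementary checks $\phi(\lambda)\ge\tfrac34+\tfrac35\lambda$ (equality on $[0,\tfrac{25}{48}]$, discriminant zero beyond) and $\phi(\lambda)\ge\lambda+\tfrac12$ (absorbing the Riemann--von Mangoldt term $Y^{1/2}N(\tfrac12)\ll Y^{\lambda+1/2}\log X$) both hold. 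The proof is complete and is the standard route to this family of zero-sum estimates.
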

	
	\begin{Lem} [Suzuki~\cite{Suzuki}] \label{Lem_14}
		For a real number $\lambda$ with $0 \le \lambda \le 1$, let $\phi(\lambda)$ be the function given by (\ref{fun_phi}), Then we have
		\begin{equation} \label{differ_psi}
			\frac{3}{5} \le \phi ' (\lambda) \le 1. 
		\end{equation}
	\end{Lem}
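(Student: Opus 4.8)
The plan is to verify \eqref{differ_psi} branch by branch, since $\phi$ is the piecewise function given by (\ref{fun_phi}), specified by three separate formulas on $[0,\tfrac{25}{48}]$, $[\tfrac{25}{48},\tfrac34]$, and $[\tfrac34,1]$. At the breakpoints $\lambda=\tfrac{25}{48},\tfrac34$ the inequality is to be read with the one‑sided derivatives, so I will also check that these agree across the branches (which in particular shows $\phi$ is $C^1$).

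First I would differentiate the two outer branches. On $(0,\tfrac{25}{48})$ the formula $\phi(\lambda)=\tfrac35\lambda+\tfrac34$ gives $\phi'(\lambda)=\tfrac35$, and on $(\tfrac34,1)$ the formula $\phi(\lambda)=\lambda+\tfrac12$ gives $\phi'(\lambda)=1$. Hence both endpoints of the claimed range are attained and \eqref{differ_psi} is immediate there.

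The only branch requiring a computation is the middle one, $\phi(\lambda)=3\lambda+2(1-\sqrt{3\lambda})$. Differentiating, $\phi'(\lambda)=3-\tfrac{3}{\sqrt{3\lambda}}=3-\sqrt{3/\lambda}$. Since $\lambda\mapsto\sqrt{3/\lambda}$ is strictly decreasing, $\phi'$ is strictly increasing on $[\tfrac{25}{48},\tfrac34]$, so it suffices to evaluate it at the two endpoints: at $\lambda=\tfrac{25}{48}$ we have $\sqrt{3/\lambda}=\sqrt{144/25}=\tfrac{12}{5}$, so $\phi'=3-\tfrac{12}{5}=\tfrac35$; at $\lambda=\tfrac34$ we have $\sqrt{3/\lambda}=\sqrt{4}=2$, so $\phi'=3-2=1$. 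Therefore $\tfrac35\le\phi'(\lambda)\le1$ on the middle interval as well, and these boundary values $\tfrac35$ and $1$ match the constant derivatives of the neighbouring branches, so \eqref{differ_psi} holds on all of $[0,1]$.

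There is no genuine obstacle here: the lemma is an elementary calculus fact, and the only point worth recording is the monotonicity of $3-\sqrt{3/\lambda}$, which reduces the middle branch to the two endpoint evaluations above. If desired one may also note in passing that the three pieces agree in value at $\lambda=\tfrac{25}{48}$ and $\lambda=\tfrac34$, so that $\phi$ is well defined and continuous, which makes the statement about $\phi'$ unambiguous.
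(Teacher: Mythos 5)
Your proof is correct. The paper does not reprove this lemma (it is cited from Suzuki), but the verification you give is the natural one: the two outer branches have constant derivatives $\tfrac35$ and $1$, and for the middle branch $\phi'(\lambda)=3-\sqrt{3/\lambda}$ is increasing from $\tfrac35$ at $\lambda=\tfrac{25}{48}$ to $1$ at $\lambda=\tfrac34$, with the one-sided derivatives matching at the breakpoints. Your computations of the endpoint values and the observation that the branches agree in value (so $\phi$ is $C^1$) are all accurate; nothing is missing.
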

	
	\begin{Lem} [Suzuki~\cite{Suzuki}] \label{Lem_16}
		Let $\phi(\lambda)$ be the function given by (\ref{fun_phi}) and consider following equations:
		\[
			\phi(\lambda_{1}) - \frac{1}{2} \lambda_{1} =1, 
			\quad
			\phi(\lambda_{2}) + \frac{1}{2} \lambda_{2} = \frac{3}{2}.
		\]
		The solutions $\lambda_{1}$ and $\lambda_{2}$ of these equations are
		\begin{equation} \label{lambda1_lambda2}
			\lambda_{1} = 1, 
			\quad
			\lambda_{2} = \frac{17+4\sqrt{15}}{49}.
		\end{equation}
	\end{Lem}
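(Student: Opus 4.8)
The plan is purely computational: for each of the two equations I would substitute, in turn, each of the three branches of $\phi$ from \eqref{fun_phi}, solve the resulting equation (at most quadratic, after clearing the square root), and keep the root that actually lies in the interval on which that branch is defined. Uniqueness of the solution in $[0,1]$ then comes for free: by \eqref{differ_psi} the function $\lambda\mapsto\phi(\lambda)-\tfrac12\lambda$ has derivative $\phi'(\lambda)-\tfrac12\ge\tfrac35-\tfrac12=\tfrac1{10}>0$, and $\lambda\mapsto\phi(\lambda)+\tfrac12\lambda$ has derivative $\ge\tfrac35+\tfrac12>0$, so both are strictly increasing on $[0,1]$ and each equation has at most one solution there.

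For $\lambda_1$ I would look first at the top branch $\tfrac34\le\lambda\le1$, where $\phi(\lambda)=\lambda+\tfrac12$, so $\phi(\lambda_1)-\tfrac12\lambda_1=\tfrac12\lambda_1+\tfrac12=1$ forces $\lambda_1=1\in[\tfrac34,1]$. To be safe I would also check the other two branches: the bottom branch gives $\lambda=\tfrac52\notin[0,\tfrac{25}{48}]$, and the middle branch gives, after squaring $\tfrac52\lambda+1=2\sqrt{3\lambda}$, i.e.\ $25\lambda^2-28\lambda+4=0$, the roots $\tfrac{14\pm4\sqrt6}{25}\approx0.95,\,0.17$, neither lying in $[\tfrac{25}{48},\tfrac34]$. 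Hence $\lambda_1=1$.

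For $\lambda_2$ the relevant branch is the middle one $\tfrac{25}{48}\le\lambda\le\tfrac34$, where $\phi(\lambda)=3\lambda+2\bigl(1-\sqrt{3\lambda}\bigr)$. Then $\phi(\lambda_2)+\tfrac12\lambda_2=\tfrac32$ rearranges to $7\lambda_2+1=4\sqrt{3\lambda_2}$, and squaring gives $49\lambda_2^2-34\lambda_2+1=0$ with roots $\lambda=\tfrac{17\pm4\sqrt{15}}{49}$. Only $\lambda=\tfrac{17+4\sqrt{15}}{49}\approx0.663$ lies in $[\tfrac{25}{48},\tfrac34]$, and since both sides of $7\lambda+1=4\sqrt{3\lambda}$ are positive there, squaring introduced no spurious root, so this value genuinely solves the equation. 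The outer branches give $\lambda=\tfrac{15}{22}$ and $\lambda=\tfrac23$, which fall outside $[0,\tfrac{25}{48}]$ and $[\tfrac34,1]$ respectively, so $\lambda_2=\tfrac{17+4\sqrt{15}}{49}$ is the unique solution.

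There is no genuine obstacle here — the whole argument is elementary algebra. The only points needing a bit of care are (i) discarding the roots introduced when one squares away the $\sqrt{3\lambda}$, which is handled by the positivity of $7\lambda+1$ and $\tfrac52\lambda+1$ on the ranges in question, and (ii) checking which branch's interval each candidate root belongs to; the monotonicity supplied by Lemma \ref{Lem_14} then closes off uniqueness.
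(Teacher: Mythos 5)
Your computation is correct, and since the paper simply cites this lemma from Suzuki without reproducing a proof, a direct branch-by-branch verification is exactly the intended argument. The algebra checks out on every branch: $\tfrac12\lambda+\tfrac12=1$ gives $\lambda_1=1\in[\tfrac34,1]$; $49\lambda^2-34\lambda+1=0$ gives $\lambda_2=\tfrac{17+4\sqrt{15}}{49}\approx 0.663\in[\tfrac{25}{48},\tfrac34]$; and the candidate roots from the other branches ($\tfrac52$, $\tfrac{14\pm4\sqrt6}{25}$, $\tfrac{15}{22}$, $\tfrac23$) all fall outside their respective branch intervals. The uniqueness argument via Lemma \ref{Lem_14} (the derivative of $\phi(\lambda)\mp\tfrac12\lambda$ is bounded below by $\tfrac1{10}>0$) is a clean way to guarantee the solutions are the only ones, and your observation that both sides of $7\lambda+1=4\sqrt{3\lambda}$ are positive correctly rules out extraneous roots from squaring. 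Nothing is missing.
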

	
	\begin{Lem} [Suzuki~\cite{Suzuki}]
		For a real number $\varepsilon$ and $0 \le \lambda \le \frac{17+4\sqrt{15}}{49} - \varepsilon$, we have
		\[
			\phi(\lambda) - \frac{1}{2} \lambda \le 1 -\frac{\varepsilon}{10}, 
			\quad
			\phi(\lambda) + \frac{1}{2} \lambda \le \frac{3}{2} -\frac{\varepsilon}{10}.
		\]
	\end{Lem}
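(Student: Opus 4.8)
The plan is to deduce both inequalities from the monotonicity of the two auxiliary functions
\[
    g_{1}(\lambda) \coloneqq \phi(\lambda) - \tfrac{1}{2}\lambda,
    \qquad
    g_{2}(\lambda) \coloneqq \phi(\lambda) + \tfrac{1}{2}\lambda
\]
on $[0,1]$, together with their values at the explicit points $\lambda_{1},\lambda_{2}$ furnished by Lemma \ref{Lem_16}. First I would record a Lipschitz-type estimate for $\phi$: by \eqref{fun_phi} the function $\phi$ is continuous on $[0,1]$ and piecewise $C^{1}$, with possible corners only at $\lambda=\tfrac{25}{48}$ and $\lambda=\tfrac{3}{4}$, and Lemma \ref{Lem_14} gives $\phi'(\lambda)\ge\tfrac{3}{5}$ away from those two points; splitting the interval of integration at the corners and summing then yields
\[
    \phi(\mu)-\phi(\lambda)\ge\tfrac{3}{5}(\mu-\lambda)
    \qquad (0\le\lambda\le\mu\le1).
\]
Consequently, for all $0\le\lambda\le\mu\le1$,
\[
    g_{2}(\mu)-g_{2}(\lambda)\ge\tfrac{11}{10}(\mu-\lambda),
    \qquad
    g_{1}(\mu)-g_{1}(\lambda)\ge\tfrac{1}{10}(\mu-\lambda);
\]
in particular both functions are increasing.

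For the bound on $g_{2}$ I would compare with $\mu=\lambda_{2}=\tfrac{17+4\sqrt{15}}{49}$, at which $g_{2}(\lambda_{2})=\tfrac{3}{2}$ by Lemma \ref{Lem_16}. Then for every $\lambda$ with $0\le\lambda\le\lambda_{2}-\varepsilon$,
\[
    g_{2}(\lambda)\le g_{2}(\lambda_{2})-\tfrac{11}{10}\varepsilon
    =\tfrac{3}{2}-\tfrac{11}{10}\varepsilon
    \le\tfrac{3}{2}-\tfrac{\varepsilon}{10},
\]
which is the second claimed inequality. For the bound on $g_{1}$ I would instead compare with $\mu=\lambda_{1}=1$, where $g_{1}(1)=1$ by Lemma \ref{Lem_16}. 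Since $4\sqrt{15}<32$ we have $\lambda_{2}=\tfrac{17+4\sqrt{15}}{49}<1$, so any $\lambda\le\lambda_{2}-\varepsilon$ satisfies $1-\lambda\ge(1-\lambda_{2})+\varepsilon\ge\varepsilon$, whence
\[
    g_{1}(\lambda)\le g_{1}(1)-\tfrac{1}{10}(1-\lambda)
    =1-\tfrac{1}{10}(1-\lambda)
    \le1-\tfrac{\varepsilon}{10},
\]
the first claimed inequality. (If $\varepsilon\ge\lambda_{2}$ the range of $\lambda$ is empty and there is nothing to prove.)

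This is essentially an interpolation argument, and the only step demanding any care is the first one: one must read Lemma \ref{Lem_14} as a bound on $\phi'$ off the corners of $\phi$ and integrate across them, which is legitimate since $\phi$ is continuous by inspection of \eqref{fun_phi}. The substantive content — that $\lambda_{2}$ is precisely the threshold at which the constraint $g_{2}\le\tfrac{3}{2}$ first becomes binding, while $g_{1}\le1$ still holds with room to spare because its own threshold is $\lambda_{1}=1>\lambda_{2}$ — is already packaged into Lemmas \ref{Lem_14} and \ref{Lem_16}, so no genuine obstacle remains.
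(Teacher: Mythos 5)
The paper does not include a proof of this lemma; it is quoted directly from Suzuki's paper as a black box, so there is no in-text argument to compare against. Your reconstruction is correct and is the natural one given the surrounding infrastructure: from Lemma \ref{Lem_14} you extract the piecewise lower bound $\phi' \ge \tfrac{3}{5}$, integrate across the corners at $\lambda = \tfrac{25}{48}$ and $\lambda = \tfrac{3}{4}$ (where $\phi$ is indeed continuous, as a quick check of \eqref{fun_phi} confirms), and deduce that $g_1 = \phi - \tfrac{1}{2}\lambda$ and $g_2 = \phi + \tfrac{1}{2}\lambda$ grow at rates at least $\tfrac{1}{10}$ and $\tfrac{11}{10}$ respectively; Lemma \ref{Lem_16} then pins down $g_1(1)=1$ and $g_2(\lambda_2)=\tfrac{3}{2}$, and the two inequalities follow by walking back by $\varepsilon$ (using $1-\lambda_2>0$ so that $1-\lambda\ge\varepsilon$ in the first case). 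All numerical constants check out ($\tfrac{3}{5}-\tfrac{1}{2}=\tfrac{1}{10}$, $\tfrac{3}{5}+\tfrac{1}{2}=\tfrac{11}{10}\ge\tfrac{1}{10}$), and the argument is complete.
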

	
	To estimate the resulting exponential integrals, we recall two standard estimates. 
	For a complex valued function $f$ defined over an interval $[ a, b ]$, let $V[a,b](f)$ be the total variation of $f$ over $[a, b]$ and
		\[
			||f|| 
			= ||f||_{BV([a, b])}
			= \sup_{x \in [a, b]} |f(x)| + V_{[a, b]} (f). 
		\]
	For a real number $x$, let $e(x)=\exp(2 \pi ix)$. 
	
	\begin{Lem} [{\cite[Lemma 2.1, p. 56]{Ivic}}]\label{Lem_18}
		For a real number $\lambda >0$ and $f$, $g$ be real-valued functions defined over an interval $[a, b]$ satisfying
		\begin{enumerate}[label=\rm{(\Alph*)}]
			\item f is continuously differentiable on the interval $[a, b]$, 
			\item $f'$ is monotonic on the interval $[a,b]$, 
			\item $f'$ satisfies $|f'(x)| \le \lambda$ on the interval $[a, b]$. 
		\end{enumerate}
		Then, we have 
		\[
			\int_{a}^{b} g(x) e(f(x)) dx 
			\ll ||g|| \lambda ^{-1}.
		\]
	\end{Lem}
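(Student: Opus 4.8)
The plan is to reduce the weighted exponential integral to an unweighted one by a single Riemann--Stieltjes integration by parts (which uses only that $g$ is of bounded variation, and this is precisely the content of $\|g\|<\infty$), and then to estimate $\int_a^b e(f(x))\,dx$ by the second mean value theorem for integrals. First I would record a normalisation: by (A) the derivative $f'$ is continuous, by (B) it is monotonic, and by (C) it satisfies $|f'(x)|\ge\lambda>0$; hence $f'$ never vanishes, and being continuous it keeps a fixed sign on $[a,b]$. Replacing $f$ by $-f$ if necessary --- which conjugates $e(f(x))$ and, since $g$ is real, changes neither the modulus of the integral nor the hypotheses --- we may assume $f'(x)\ge\lambda$ throughout. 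Then $w:=1/f'$ is a positive, continuous, monotonic function with $0<w(x)\le\lambda^{-1}$, so its total variation on $[a,b]$ equals $|w(b)-w(a)|\le\lambda^{-1}$.

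Next I would handle the weight-free integral $F(x):=\int_a^x e(f(t))\,dt$. Since $\cos(2\pi f(t))=\frac{w(t)}{2\pi}\,\frac{d}{dt}\sin(2\pi f(t))$ and similarly $\sin(2\pi f(t))=-\frac{w(t)}{2\pi}\,\frac{d}{dt}\cos(2\pi f(t))$, with $w$ monotone and $(\sin 2\pi f)'$, $(\cos 2\pi f)'$ continuous (hence Riemann integrable), Bonnet's form of the second mean value theorem applies: for a suitable $\xi\in[a,x]$ the real part of $F(x)$ equals $\frac{1}{2\pi}\bigl(w(a)(\sin 2\pi f(\xi)-\sin 2\pi f(a))+w(x)(\sin 2\pi f(x)-\sin 2\pi f(\xi))\bigr)$, of modulus $\le\frac{1}{\pi}(w(a)+w(x))\le\frac{2}{\pi\lambda}$, and the imaginary part is bounded the same way. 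Thus $|F(x)|\ll\lambda^{-1}$ uniformly for $x\in[a,b]$; note that every subinterval $[a,x]$ inherits (A)--(C), so the bound is legitimate for all such $x$.

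Finally I would reinstate the weight. Because $g$ has bounded variation and $F$ is continuous with $F(a)=0$, Riemann--Stieltjes integration by parts gives $\int_a^b g(x)e(f(x))\,dx=\int_a^b g\,dF=g(b)F(b)-\int_a^b F\,dg$. The boundary term is $\le\bigl(\sup_{[a,b]}|g|\bigr)\,|F(b)|\ll\bigl(\sup_{[a,b]}|g|\bigr)\lambda^{-1}$, and the remaining Stieltjes integral is $\le\bigl(\sup_{[a,b]}|F|\bigr)V_{[a,b]}(g)\ll V_{[a,b]}(g)\,\lambda^{-1}$. Adding the two and recalling $\|g\|=\sup_{[a,b]}|g|+V_{[a,b]}(g)$ yields $\int_a^b g(x)e(f(x))\,dx\ll\|g\|\,\lambda^{-1}$, as asserted.

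The argument is classical and there is no serious obstacle; the two points needing care are (i) deriving from (A)--(C) that $f'$ has constant sign, so that $w=1/f'$ is genuinely monotone --- this is exactly where the continuity of $f'$ from (A) enters --- and (ii) running the second mean value theorem and the Stieltjes integration by parts at the stated generality, with $g$ only of bounded variation rather than smooth; here it is essential that $F$, being an indefinite integral, is continuous, so $g$ contributes no jump terms to the integration-by-parts identity.
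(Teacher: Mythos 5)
The paper cites this lemma from Ivi\'c's book without giving a proof of its own, so there is no in-paper argument to compare against; the question is simply whether your proposed proof is sound. It is, with one caveat you correctly (and silently) addressed: as printed, hypothesis (C) reads $|f'(x)| \le \lambda$, which cannot be right --- shrinking $\lambda$ would weaken the hypothesis while strengthening the conclusion $\ll \|g\|\lambda^{-1}$. The intended (and Ivi\'c's actual) condition is $|f'(x)| \ge \lambda > 0$, which is what you use. Granting that, your chain of reasoning is the standard one and is correct: continuity plus $|f'| \ge \lambda$ forces $f'$ to have constant sign, so $w = 1/f'$ is monotone with $0 < |w| \le \lambda^{-1}$; Bonnet's second mean value theorem, applied separately to the real and imaginary parts of $F(x) = \int_a^x e(f(t))\,dt$ after writing $\cos(2\pi f) = \tfrac{w}{2\pi}(\sin 2\pi f)'$ and the analogous identity for $\sin$, yields $|F(x)| \ll \lambda^{-1}$ uniformly on $[a,b]$; and Riemann--Stieltjes integration by parts, valid since $g$ is of bounded variation and $F$ is continuous with $F(a)=0$, converts this into $\bigl|\int_a^b g\,e(f)\bigr| \le |g(b)|\,|F(b)| + (\sup|F|)\,V_{[a,b]}(g) \ll \|g\|\lambda^{-1}$. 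This matches the classical proof of the first-derivative test for weighted exponential integrals; no gap.
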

	
	\begin{Lem} [{\cite[Lemma 2.2, p. 56]{Ivic}}]\label{Lem_19}
		For a real number $\lambda >0$ and $f$, $g$ be real-valued functions defined over an interval $[a, b]$ satisfying
		\begin{enumerate}[label=\rm{(\Alph*)}]
			\item $f$ is twice continuously differentiable on the interval $[a, b]$, 
			\item $f''$ satisfies $|f''(x)| \le \lambda$ on the interval $[a, b]$. 
		\end{enumerate}
		Then, we have 
		\[
			\int_{a}^{b} g(x) e(f(x)) dx 
			\ll ||g|| \lambda ^{-\frac{1}{2}}.
		\]
	\end{Lem}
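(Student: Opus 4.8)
\section{Proof of Lemma \ref{Lem_19}}

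The plan is to deduce the unweighted estimate from the paper's own first–derivative bound (Lemma \ref{Lem_18}), and then to pass to the weighted integral by partial summation. We use hypothesis (B) in the form that bounds $|f''|$ from below by $\lambda$ on $[a,b]$ (this is the content of Ivi\'c's Lemma 2.2 and the form in which the lemma is applied): since $f''$ is continuous and $\lambda>0$, this means $f''\ge\lambda$ throughout $[a,b]$ or $f''\le-\lambda$ throughout, and after replacing $f$ by $-f$ (which leaves $\bigl|\int_a^b g(x)e(f(x))\,dx\bigr|$ unchanged when $g$ is real-valued) we may assume $f''\ge\lambda>0$. All hypotheses are inherited by subintervals, so it suffices to prove:
\begin{enumerate}[label=\rm{(\roman*)}]
	\item $\bigl|\int_c^d e(f(x))\,dx\bigr|\ll\lambda^{-1/2}$ for every subinterval $[c,d]\subseteq[a,b]$;
	\item (i) implies the stated bound with the weight $g$.
\end{enumerate}

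For (i): since $f''\ge\lambda>0$, $f'$ is strictly increasing on $[c,d]$, so $E\coloneqq\{x\in[c,d]:|f'(x)|\le\lambda^{1/2}\}$ is a subinterval (the $f'$-preimage of $[-\lambda^{1/2},\lambda^{1/2}]$), and from $f'(x_2)-f'(x_1)\ge\lambda(x_2-x_1)$ for $x_1\le x_2$ we get $|E|\le 2\lambda^{-1/2}$; hence $\bigl|\int_E e(f(x))\,dx\bigr|\le|E|\ll\lambda^{-1/2}$. On each of the (at most two) complementary subintervals $I$ of $[c,d]\setminus E$, the function $f'$ is monotonic and satisfies $|f'(x)|\ge\lambda^{1/2}$, so Lemma \ref{Lem_18} applied with weight $\equiv 1$ (for which $\|\cdot\|=1$) and with its parameter taken to be $\lambda^{1/2}$ gives $\bigl|\int_I e(f(x))\,dx\bigr|\ll\lambda^{-1/2}$. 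Summing the at most three pieces proves (i).

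For (ii): put $F(t)\coloneqq\int_t^b e(f(x))\,dx$, so $F$ is continuous, $F(b)=0$, and applying (i) on $[t,b]$ gives $\sup_{t\in[a,b]}|F(t)|\ll\lambda^{-1/2}$. Since $dF(x)=-e(f(x))\,dx$, Riemann--Stieltjes integration by parts (legitimate because $F$ is continuous and $g$ has bounded variation) yields
\[
	\int_a^b g(x)e(f(x))\,dx=-\int_a^b g(x)\,dF(x)=g(a)F(a)+\int_a^b F(x)\,dg(x),
\]
and therefore
\[
	\left|\int_a^b g(x)e(f(x))\,dx\right|\le|g(a)|\,|F(a)|+\Bigl(\sup_{[a,b]}|F|\Bigr)V_{[a,b]}(g)\ll\lambda^{-1/2}\bigl(|g(a)|+V_{[a,b]}(g)\bigr)\le\lambda^{-1/2}\|g\|,
\]
which is the claim.

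The only point requiring care --- the \emph{main obstacle}, such as it is --- is the splitting in step (i): one must observe that the lower bound on $f''$ confines the region where $f'$ is small to an interval of length $\ll\lambda^{-1/2}$, and that the threshold $|f'|\asymp\lambda^{1/2}$ is precisely the one balancing the trivial bound $|E|\ll\lambda^{-1/2}$ against the first–derivative bound $\ll|f'|^{-1}\le\lambda^{-1/2}$ on the complement. Everything else is routine bookkeeping: verifying that subintervals inherit the hypotheses, reducing the sign of $f''$ to a single case via $f\mapsto-f$, and the standard partial summation that introduces the factor $\|g\|$.
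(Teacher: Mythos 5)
The paper states this lemma as a citation to Ivi\'c's book and gives no proof of its own, so there is nothing to compare line by line; what can be assessed is whether your argument is a correct derivation of the cited result. It is. You correctly noticed that hypothesis (B) as printed (``$|f''(x)|\le\lambda$'') is a typo --- an upper bound on $|f''|$ cannot yield a bound on the integral, as the case $f\equiv 0$ shows --- and that the intended hypothesis is $|f''(x)|\ge\lambda$, matching Ivi\'c's Lemma 2.2 (the same sign error occurs in the paper's statement of Lemma \ref{Lem_18}). Your reduction to a single sign of $f''$ via the intermediate value theorem and $f\mapsto -f$ is clean, the splitting $[c,d]=E\cup([c,d]\setminus E)$ with $E=\{|f'|\le\lambda^{1/2}\}$ and $|E|\le 2\lambda^{-1/2}$ is exactly the standard van der Corput argument deducing the second-derivative test from the first, and your application of Lemma \ref{Lem_18} on the at most two complementary intervals (where $f'$ is monotone and does not change sign, so $|f'|\ge\lambda^{1/2}$ is available) is valid. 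The Riemann--Stieltjes integration by parts in (ii), using $F(b)=0$ and $\sup|F|\ll\lambda^{-1/2}$ to pick up $|g(a)|+V_{[a,b]}(g)\le\|g\|$, is the usual way to introduce the $BV$ norm and is carried out correctly. This is precisely how the result is proved in Ivi\'c and in most references; there is no gap.
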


%---------------------------------------------------3----------------------------------------------------
\section{proof of the main theorem}
	
	This section is devoted to the proof of Theorem \ref{main}. 
	Before that, we prepare some lemmas.
		
	\begin{Lem} \label{Lem_20}
		For real numbers $X$, $H$, $\varepsilon$ with $4 \le H \le X$ and $\varepsilon >0$, we have
		\[
			\sum_{X \le N \le X+H} \widetilde{R}_{B}(N) 
			= \sum_{\substack{X< m+f \le X+H \\ f \in \mathscr{Q}_{B}}} \Lambda (m) +O(HX^{\frac{1}{2}}C^{-1}). 
		\]
	\end{Lem}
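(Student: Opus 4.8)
The plan is to pass from the prime-weighted sum defining $\widetilde{R}_{B}$ to the von Mangoldt weight, the discrepancy being exactly the contribution of the proper prime powers. By the identity for $\sum\widetilde{R}_{B}(N)$ recorded above (writing each element of $\mathscr{Q}_{B}$ uniquely as $a^{2}b^{3}$ with $b$ squarefree and $b\le B$) together with the observation that the single endpoint $N=X$ contributes only $\widetilde{R}_{B}(X)\le(\log X)\cdot\#\{f\in\mathscr{Q}_{B}:f<X\}\ll X^{1/2}\log X$, one has
\[
	\sum_{X \le N \le X+H} \widetilde{R}_{B}(N)
	= \sum_{\substack{X < p+f \le X+H \\ f \in \mathscr{Q}_{B}}} \log p + O\bigl(X^{1/2}\log X\bigr).
\]
Since $\Lambda(m)=\log p$ when $m=p^{k}$ with $k\ge1$ and $\Lambda(m)=0$ otherwise, the difference between the last sum and $\sum_{X<m+f\le X+H,\,f\in\mathscr{Q}_{B}}\Lambda(m)$ is precisely
\[
	E := \sum_{\substack{X < p^{k}+f \le X+H \\ f \in \mathscr{Q}_{B},\ k\ge2}} \log p,
\]
which is non-negative, so it suffices to prove $E \ll HX^{1/2}C^{-1}$.

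To bound $E$ I would interchange the order of summation and sum over the prime power $m=p^{k}$ first. For a fixed $m\le X+H$ with $k\ge2$, the inner sum counts the $f\in\mathscr{Q}_{B}$ lying in the interval $(X-m,\,X+H-m]$, intersected with $[1,\infty)$, which has length at most $H$. Fixing the cube part $b\le B$ and counting the admissible squares $a^{2}$, the number of such $a$ is $\le (H/b^{3})^{1/2}+1$ by the inequality $\sqrt{u}-\sqrt{v}\le\sqrt{u-v}$; summing over $b$ gives, uniformly in $m$, a count $\ll H^{1/2}\sum_{b\ge1}b^{-3/2}+B\ll H^{1/2}+B$ (for $m\le X/2$ one could instead quote Lemma~\ref{Lem_5} with left endpoint $X-m\ge X/2$). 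Consequently
\[
	|E| \ll \bigl(H^{1/2}+B\bigr)\sum_{\substack{m=p^{k}\le X+H \\ k\ge2}} \log p
	= \bigl(H^{1/2}+B\bigr)\bigl(\psi(X+H)-\vartheta(X+H)\bigr).
\]

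Finally, the elementary identity $\psi(Y)-\vartheta(Y)=\sum_{k\ge2}\vartheta(Y^{1/k})$ gives $\psi(Y)-\vartheta(Y)\ll Y^{1/2}$, so $|E|\ll\bigl(H^{1/2}+B\bigr)X^{1/2}=X^{1/2}H^{1/2}+X^{1/2}B$. Since $H\ge4$ we have $H^{1/2}\le H$ and $C\le1$, hence $X^{1/2}H^{1/2}\ll HX^{1/2}\le HX^{1/2}C^{-1}$; and recalling $B=(\log X)^{4A}$, the product $BC=(\log X)^{4A}\exp\bigl(-c(\log X/\log\log X)^{1/3}\bigr)$ tends to $0$, so $B\ll C^{-1}\le HC^{-1}$ and therefore $X^{1/2}B\ll HX^{1/2}C^{-1}$. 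Together with the endpoint error $O(X^{1/2}\log X)$, which is also $\ll HX^{1/2}C^{-1}$, this proves the lemma. I expect the only delicate point to be the choice of summation order: bounding the original double sum by estimating $\sum_{X<p+f\le X+H}\log p$ separately for each $f\in\mathscr{Q}_{B}$ discards the short-interval saving of size $H^{1/2}$ in the number of square-full numbers in a window of length $H$ and only yields $O(X)$, which is too weak; summing over the prime power first is exactly what recovers it. Everything else is routine.
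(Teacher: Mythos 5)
Your proof is correct, and it is more careful than the paper's. The paper's written proof is terse to the point of containing apparent typos: its opening identity restricts the summation over $f\in\mathscr{Q}_{B}$ to the range $X<f\le X+H$ where it should read $f\le X+H$, and it then quotes the pointwise estimate $\psi(x)-\vartheta(x)\ll x^{1/2}$ together with an error term of the form $O\bigl(X^{1/2}\sum_{X<f\le X+H,\,f\in\mathscr{Q}_{B}}1\bigr)$, which does not obviously account for the prime-power contribution coming from the (far more numerous) $f\le X$. As you correctly observed, applying the $O(X^{1/2})$ bound per $f$ over all $\asymp X^{1/2}$ square-full numbers up to $X+H$ would give only $O(X)$, which is useless for $H<X^{1/2}$. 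Your interchange of summation — fixing the prime power $m=p^{k}$ ($k\ge2$) and counting the $f\in\mathscr{Q}_{B}$ in an interval of length $\le H$, with the uniform bound $\ll H^{1/2}+B$ via $\sqrt{u}-\sqrt{v}\le\sqrt{u-v}$ applied cube-part by cube-part — is exactly the missing step that makes the error bound $\ll(H^{1/2}+B)(\psi(X+H)-\vartheta(X+H))\ll(H^{1/2}+B)X^{1/2}\ll HX^{1/2}C^{-1}$ go through uniformly in $4\le H\le X$. Your handling of the endpoint $N=X$ and the final size comparisons ($H^{1/2}\le H$, $BC=o(1)$) are all correct. So the underlying idea (pass from $\log p$ to $\Lambda(m)$ and control the prime-power discrepancy using the sparsity of square-full numbers in short windows) matches the paper's intent, but your execution is cleaner and actually fills a gap in the paper's argument as written.
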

	\begin{proof}
		By the definition of $\widetilde{R}_{B}(N)$, 
		\[
			\sum_{X \le N \le X+H}\widetilde{R}_{B}(N) 
			= \sum_{\substack{X < f \le X+H \\ f \in \mathscr{Q}_{B}}} \sum_{X-f < p \le X+H-f} \log p.
		\]
		From $\psi(x) = \vartheta(x) + O(x^{\frac{1}{2}})$ and Lemma \ref{Lem_5}, 
		\begin{align*}
			\sum_{\substack{X < f \le X+H \\ f \in \mathscr{Q}_{B}}} \widetilde{R}_{B}(N)
			&= \sum_{\substack{X < p+f \le X+H \\ f \in \mathscr{Q}_{B}}} \Lambda(m) + O \bigg(X^{\frac{1}{2}} \sum_{\substack{X < f \le X+H \\ f \in \mathscr{Q}_{B}}} 1 \bigg) \\
			&= \sum_{\substack{X < p+f \le X+H \\ f \in \mathscr{Q}_{B}}} \Lambda(m) + O(HB) \\
			&= \sum_{\substack{X < p+f \le X+H \\ f \in \mathscr{Q}_{B}}} \Lambda(m) + O(HX^{\frac{1}{2}}C^{-1}).
		\end{align*}
		We odtain the lemma.
	\end{proof}

	\begin{Lem} \label{Lem_21}
		For real numbers $X$, $H$, $\varepsilon$ with $4 \le H \le X$ and $\varepsilon >0$, we have
		\[
			\sum_{X \le N \le X+H} \widetilde{R}_{B}(N) 
			= \sum_{\substack{f \le X \\ f \in \mathscr{Q}_{B}}} \bigg( \psi(X+H-f) - \psi(X-f) \bigg) +O(HX^{\frac{1}{2}}C^{-1}). 
		\]
	\end{Lem}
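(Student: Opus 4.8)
The plan is to deduce this from Lemma~\ref{Lem_20} by carrying out the summation over $m$ for each fixed $f$. For a fixed $f \in \mathscr{Q}_{B}$ the inner constraint $X < m+f \le X+H$ reads $X-f < m \le X+H-f$, so, with the usual convention that $\psi(y)=0$ for $y\le 0$ (equivalently $y<2$), one has
\[
	\sum_{X-f < m \le X+H-f} \Lambda(m) = \psi(X+H-f) - \psi(X-f)
\]
in every case, including when $X-f<0$ (so that $\psi(X-f)=0$) and when $X+H-f\le 0$ (so that both terms vanish). Summing over $f\in\mathscr{Q}_{B}$ and noting that only $f\le X+H$ contribute, Lemma~\ref{Lem_20} gives
\[
	\sum_{X \le N \le X+H} \widetilde{R}_{B}(N) = \sum_{\substack{f \le X+H \\ f \in \mathscr{Q}_{B}}} \big( \psi(X+H-f) - \psi(X-f) \big) + O(HX^{\frac12}C^{-1}).
\]

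Next I would split off the boundary range $X < f \le X+H$, on which $X-f<0$ and hence $\psi(X-f)=0$, so that the right-hand side becomes
\[
	\sum_{\substack{f \le X \\ f \in \mathscr{Q}_{B}}} \big( \psi(X+H-f) - \psi(X-f) \big) + \sum_{\substack{X < f \le X+H \\ f \in \mathscr{Q}_{B}}} \psi(X+H-f) + O(HX^{\frac12}C^{-1}).
\]
It then remains to absorb the middle sum into the error term. For $f$ in that range we have $0\le X+H-f<H$, so by monotonicity of $\psi$ and Chebyshev's estimate $\psi(X+H-f)\le\psi(H)\ll H$ (using $H\ge 4$); and by Lemma~\ref{Lem_5} the number of $f\in\mathscr{Q}_{B}$ with $X<f\le X+H$ is $\ll HX^{-1/2}B + B$. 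Hence the middle sum is
\[
	\ll H\big( HX^{-1/2}B + B \big) \ll HX^{\frac12}B,
\]
where the last step uses $H\le X$ and $B\ge 1$.

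Finally, recall $B=(\log X)^{4A}$, while $C^{-1}=\exp\big(c(\log X/\log\log X)^{1/3}\big)$ grows faster than any fixed power of $\log X$; thus $B\ll C^{-1}$ and the middle sum is $O(HX^{1/2}C^{-1})$, which merges with the existing error term and yields the claimed identity. I do not expect a genuine obstacle here: the only points that need care are the bookkeeping of the boundary range $X<f\le X+H$ together with the convention for $\psi$ at nonpositive arguments, and the verification that the tail contribution $HX^{1/2}B$ is indeed dominated by $O(HX^{1/2}C^{-1})$, which rests on $H\le X$ and on $B$ being merely a power of $\log X$.
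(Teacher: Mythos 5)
Your argument is correct and follows the same route as the paper: reduce via Lemma~\ref{Lem_20}, split the resulting sum over $f$ at $f\le X$ versus $X<f\le X+H$, and absorb the latter into the error term using $\psi(x)\ll x$ together with Lemma~\ref{Lem_5} and $B=(\log X)^{4A}\ll C^{-1}$. The only cosmetic difference is that you invoke monotonicity and $\psi(X-f)=0$ where the paper simply applies $\psi(x)\ll x$ to the difference, which amounts to the same bound.
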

	\begin{proof}
		From Lemma \ref{Lem_20},
		\begin{equation} \label{use_Lem20}
			\sum_{X \le N \le X+H} \widetilde{R}_{B}(N) 
			= \sum_{\substack{X< m+f \le X+H \\ f \in \mathscr{Q}_{B}}} \Lambda (m) +O(HX^{\frac{1}{2}}C^{-1}) . 
		\end{equation}
		The first term on the right hand side of (\ref{use_Lem20}) is 
		\begin{align*}
			\sum_{\substack{X< m+f \le X+H \\ f \in \mathscr{Q}_{B}}} \Lambda (m)
			&= \sum_{\substack{f \le X+H \\ f \in \mathscr{Q}_{B}}} \sum_{X-f < m \le X+H-f} \Lambda (m) \\
			&= \sum_{\substack{f \le X \\ f \in \mathscr{Q}_{B}}} \bigg( \psi(X+H-f) - \psi(X-f) \bigg) \\
			&\hspace{5em} + \sum_{\substack{X < f \le X+H \\ f \in \mathscr{Q}_{B}}} \bigg( \psi(X+H-f) - \psi(X-f) \bigg) . 
		\end{align*}
		By $\psi(x) \ll x$ and Lemma \ref{Lem_5}, 
		\begin{align*}
			\sum_{\substack{X < f \le X+H \\ f \in \mathscr{Q}_{B}}} \bigg( \psi(X+H-f) - \psi(X-f) \bigg) 
			&\ll H \sum_{\substack{X < f \le X+H \\ f \in \mathscr{Q}_{B}}} 1 \\
			&= H (HX^{-\frac{1}{2}}B + B) \\
			&\ll HX^{\frac{1}{2}}C^{-1} . 
		\end{align*}
		This gives the assertion.
	\end{proof}

	Let
			\[
			S_{\alpha} (Q) \coloneqq \frac{1}{\alpha} \sum_{\substack{f \le X \\ f \in \mathscr{Q}_{B}}} (Q-f)^{\alpha},
			\quad
			S(Q) = S_{1} (Q).
		\]
	
	\begin{Lem} \label{Lem_22}
		For real numbers $X$, $H$, $Q$, $T$, with $4 \le H \le X$, $X \le Q \le X+H$ and $2 \le T \le X$, we have
		\[
			\sum_{\substack{f \le X \\ f \in \mathscr{Q}_{B}}} \psi(Q-f)
			= S(Q) - \sum_{\substack{\rho \\ | \gamma | \le T}} S_{\rho}(Q) + O(X^{\frac{3}{2}}T^{-1} (\log X)^{2} ).
		\]
	\end{Lem}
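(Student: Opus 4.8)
The plan is a direct application of the explicit formula of Lemma~\ref{Lem_9}, summand by summand. The sum on the left runs over square-full $f\in\mathscr{Q}_{B}$ with $f\le X$, and since $X\le Q\le X+H\le 2X$ the argument satisfies $0\le Q-f\le X+H\le 2X$. I therefore invoke Lemma~\ref{Lem_9} with the role of ``$X$'' played by $2X$ (the hypothesis $2\le T\le 2\cdot(2X)$ holds since $T\le X$), which gives, uniformly for every such $f$,
\[
	\psi(Q-f) = (Q-f) - \sum_{\substack{\rho \\ |\gamma| \le T}} \frac{(Q-f)^{\rho}}{\rho} + O\big( X T^{-1} (\log X)^{2} \big).
\]

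Next I sum this identity over all $f\le X$ with $f\in\mathscr{Q}_{B}$. The linear term contributes $\sum_{f\le X,\,f\in\mathscr{Q}_{B}}(Q-f)=S_{1}(Q)=S(Q)$ by the definition of $S$. In the zero term the inner sum over $\rho$ with $|\gamma|\le T$ is finite, so I may interchange the two sums; pulling $1/\rho$ out of the sum over $f$ yields $\sum_{|\gamma|\le T}\rho^{-1}\sum_{f\le X,\,f\in\mathscr{Q}_{B}}(Q-f)^{\rho}=\sum_{|\gamma|\le T}S_{\rho}(Q)$, which is exactly the zero-sum in the statement. Finally, the accumulated error is $O\big(XT^{-1}(\log X)^{2}\big)$ times the number of square-full integers up to $X$; writing such an integer as $a^{2}b^{3}$ with $b$ square-free one gets $\sum_{b}\sum_{a^{2}\le X/b^{3}}1\ll X^{1/2}\sum_{b}b^{-3/2}\ll X^{1/2}$ (alternatively this follows from summing Lemma~\ref{Lem_5} dyadically, or from Lemma~\ref{Lem_7}), so the total error is $O\big(X^{3/2}T^{-1}(\log X)^{2}\big)$, as claimed.

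There is no serious obstacle here; the proof is essentially bookkeeping. The two points that need a word of care are: (i) the argument $Q-f$ can slightly exceed $X$, which is why one must feed $2X$ rather than $X$ into Lemma~\ref{Lem_9} (and check the resulting hypothesis $T\le 4X$, which is immediate); and (ii) the interchange of the sum over nontrivial zeros with the sum over $f$, which is legitimate because only finitely many zeros have $|\gamma|\le T$. Everything else is the crude count of square-full numbers and collecting error terms.
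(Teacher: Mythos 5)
Your proof is correct and is exactly the argument the paper intends; the paper's own proof is the single sentence ``From Lemma~\ref{Lem_9}, this immediately follows,'' and your write-up simply makes explicit the routine points (applying Lemma~\ref{Lem_9} with $2X$ in the role of $X$ so that the range $0\le Q-f\le X+H\le 2X$ is covered, interchanging the finite sum over zeros with the sum over $f$, and bounding the number of terms by $\#\{f\in\mathscr{Q}_B:f\le X\}\ll\sum_{b\le B}\mu(b)^2\sqrt{X/b^3}\ll X^{1/2}$).
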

	\begin{proof}
		From Lemma \ref{Lem_9}, this immediately follows. 
	\end{proof}
	
	\begin{Lem} [Suzuki~\cite{Suzuki}] \label{Lem_23}
		For a integer $n$ and real numbers $\alpha, \gamma, Q, U, V$ with $\alpha \le 1$, $|\gamma| \ge 1$ and $1 \le U \le V \le Q$, we have
		\[
			\int_{U}^{V} u^{\alpha +i\gamma -1} e \bigg( n(Q-u)^{\frac{1}{2}} \bigg) du 
			\ll
			\left\{ \,
				\begin{aligned}
					& \frac{V^{\alpha} \log X}{|\gamma|^{\frac{1}{2}}} & (\alpha \ge 0), \\
					& \frac{U^{\alpha} \log X}{|\gamma|^{\frac{1}{2}}} & (\alpha \le 0), \\
					& \frac{Q^{\frac{1}{2}}}{|n|} & (|n| > 2Q^{\frac{1}{2}} |\gamma|).
				\end{aligned}
			\right.
		\]
	\end{Lem}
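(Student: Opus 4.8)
The plan is to put the integral into the form $\int_{U}^{V} g(u)\,e(F(u))\,du$ and then apply the first- and second-derivative tests of Lemmas~\ref{Lem_18} and~\ref{Lem_19} after a dyadic splitting of $[U,V]$. Using $u^{i\gamma}=e\!\left(\tfrac{\gamma}{2\pi}\log u\right)$ we rewrite
\[
	\int_{U}^{V} u^{\alpha+i\gamma-1}\,e\!\left(n(Q-u)^{1/2}\right) du
	=\int_{U}^{V} g(u)\,e(F(u))\,du,
	\qquad g(u)=u^{\alpha-1},\quad F(u)=\frac{\gamma}{2\pi}\log u+n(Q-u)^{1/2},
\]
where $g$ and $F$ are real on $[U,V]\subseteq[1,Q]$. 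Since $\alpha\le1$ and $u\ge1$ the weight $g$ is non-increasing, so $\|g\|_{[U_{1},U_{2}]}\le2U_{1}^{\alpha-1}$ for every $[U_{1},U_{2}]\subseteq[U,V]$, and in particular $\|g\|_{[U,V]}\le2U^{\alpha-1}\le2$. A direct computation gives
\[
	F'(u)=\frac{\gamma}{2\pi u}-\frac{n}{2(Q-u)^{1/2}},\qquad
	F''(u)=-\frac{\gamma}{2\pi u^{2}}-\frac{n}{4(Q-u)^{3/2}},\qquad
	F'''(u)=\frac{\gamma}{\pi u^{3}}-\frac{3n}{8(Q-u)^{5/2}}.
\]
The structural fact I would record first is that $[U,V]$ always decomposes into at most two subintervals on each of which $F'$ is monotonic: if $\gamma n\ge0$ the two summands of $F''$ have a common sign, so $F''$ has constant sign; if $\gamma n<0$ the two summands of $F'''$ have a common sign, so $F''$ is strictly monotonic and vanishes at most once.

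For the third bound I would suppose $|n|>2Q^{1/2}|\gamma|$. Then, for all $u\in[U,V]$, using $(Q-u)^{1/2}\le Q^{1/2}$, $u\ge1$ and $|\gamma|<|n|/(2Q^{1/2})$,
\[
	|F'(u)|\ \ge\ \frac{|n|}{2(Q-u)^{1/2}}-\frac{|\gamma|}{2\pi u}\ \ge\ \frac{|n|}{2Q^{1/2}}-\frac{|\gamma|}{2\pi}\ >\ \frac{|n|}{4Q^{1/2}}.
\]
Applying Lemma~\ref{Lem_18} with $\lambda=|n|/(4Q^{1/2})$ on each of the (at most two) intervals where $F'$ is monotonic yields a total of $\ll\|g\|_{[U,V]}\,Q^{1/2}|n|^{-1}\ll Q^{1/2}|n|^{-1}$. (When $V=Q$ the blow-up of $F'$ at $u=Q$ only makes $|F'|$ larger there; one applies the test on $[U,Q-\delta]$ and lets $\delta\to0^{+}$, the discarded tail being $\ll\delta$.)

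For the first two bounds I would split $[U,V]$ into $O(\log X)$ intervals $I_{j}=[U_{j},U_{j+1}]$ with $U\le U_{j}$ and $U_{j+1}\le2U_{j}$ (possible since $V\le Q\asymp X$). On $I_{j}$, where $u\in[U_{j},2U_{j}]$ and $\|g\|_{I_{j}}\le2U_{j}^{\alpha-1}$: if $\gamma n\ge0$ then $|F''(u)|\ge|\gamma|/(2\pi u^{2})\ge|\gamma|/(8\pi U_{j}^{2})$, so Lemma~\ref{Lem_19} gives $\int_{I_{j}}\ll U_{j}^{\alpha-1}\cdot U_{j}|\gamma|^{-1/2}=U_{j}^{\alpha}|\gamma|^{-1/2}$; if $\gamma n<0$ then the two summands of $F'$ have a common sign, so $|F'(u)|\ge|\gamma|/(2\pi u)\ge|\gamma|/(4\pi U_{j})$, and, splitting $I_{j}$ once at the unique possible zero of $F''$ so that $F'$ is monotonic on each piece, Lemma~\ref{Lem_18} gives $\int_{I_{j}}\ll U_{j}^{\alpha-1}\cdot U_{j}|\gamma|^{-1}=U_{j}^{\alpha}|\gamma|^{-1}\le U_{j}^{\alpha}|\gamma|^{-1/2}$. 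Summing over the $O(\log X)$ blocks and using $U\le U_{j}\le V$,
\[
	\int_{U}^{V} g(u)\,e(F(u))\,du\ \ll\ \frac{\log X}{|\gamma|^{1/2}}\,\max\!\left(U^{\alpha},V^{\alpha}\right),
\]
which is the first bound when $\alpha\ge0$ and the second when $\alpha\le0$.

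The step I expect to be the main obstacle is the regime $\gamma n<0$: there $F'$ need not be monotonic, so Lemma~\ref{Lem_18} is not directly applicable, while $F''$ may be small, so Lemma~\ref{Lem_19} is powerless. The resolution is exactly the $F'''$ sign analysis above: it forces $F''$ to be monotonic (hence $F'$ monotonic on at most two pieces), and in this same regime the two summands of $F'$ reinforce rather than cancel, which is what gives the clean lower bound $|F'(u)|\ge|\gamma|/(2\pi u)$. The remaining care is purely bookkeeping: the factor $\log X$ is genuinely needed only at $\alpha=0$, where the dyadic sum collapses to a mere count of the $O(\log X)$ blocks.
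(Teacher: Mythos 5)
The paper does not prove Lemma~\ref{Lem_23}: it is imported verbatim from Suzuki~\cite{Suzuki}, so there is no in-paper argument to compare your proposal against. That said, your proof is correct, and it proceeds by the standard mechanism one expects for such a lemma: rewrite the integrand as $g(u)\,e(F(u))$ with $g(u)=u^{\alpha-1}$ and $F(u)=\tfrac{\gamma}{2\pi}\log u + n(Q-u)^{1/2}$, then apply the first- and second-derivative tests after identifying where $F'$ is monotonic or $F''$ is bounded below.

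A few points worth recording so the write-up is airtight. First, the paper's statements of Lemmas~\ref{Lem_18} and~\ref{Lem_19} contain a sign typo: the hypotheses must read $|f'(x)|\ge\lambda$ and $|f''(x)|\ge\lambda$ respectively (as in Ivi\'c), and you have correctly applied the intended versions; it is worth flagging this explicitly. Second, your sign dichotomy is exactly the crux: when $\gamma n\ge 0$ the two terms of $F''$ reinforce, giving $|F''|\ge|\gamma|/(2\pi u^2)$ uniformly and Lemma~\ref{Lem_19} applies without any monotonicity concerns; when $\gamma n<0$ the two terms of $F'$ reinforce, giving $|F'|\ge|\gamma|/(2\pi u)$, and $F'''$ having constant sign forces $F''$ to vanish at most once, so $[U,V]$ splits into at most two pieces on which $F'$ is monotonic — this is what legitimizes Lemma~\ref{Lem_18}. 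Your dyadic block count $O(\log(V/U))\ll\log X$ uses $V\le Q\ll X$, which is implicit in the lemma's appearance of $\log X$; that is consistent with how the lemma is invoked in the paper ($Q\in[X,X+H]$). Finally, your handling of the endpoint $u=Q$ by truncating to $[U,Q-\delta]$ and noting the discarded tail is $\ll\delta$ is fine because $\|g\|_\infty\le U^{\alpha-1}\le 1$. The numerical check for the third bound, $\tfrac12-\tfrac{1}{4\pi}>\tfrac14$, is also correct. In short: the argument stands.
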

	
	\begin{Lem} \label{Lem_24}
		For real numbers $X$, $H$, $\varepsilon$ with $4 \le H \le X$ and $\varepsilon >0$ and a non-trivial zero $\rho = \beta +i\gamma$ of $\zeta(s)$ with $|\gamma| \le 2X$, we have
		\begin{align*}
			S_{\rho}(X+H) -S_{\rho}(X) 
			\ll & \frac{\Gamma(\rho) \Gamma(\frac{1}{2})}{\Gamma(\rho +\frac{3}{2})} \bigg( (X+H)^{\rho +\frac{1}{2}} - X^{\rho + \frac{1}{2}} \bigg)
			+ \frac{(X+H)^{\rho} -X^{\rho}}{2\rho} B \\
			& + H^{\beta} |\gamma|^{\beta -\frac{1}{2}} (\log X)^{2} 
			+ \frac{HX^{\frac{1}{2}}C^{-2}}{|\gamma|}
			+ B \log X 
		\end{align*}
		provided
		\[
			X^{\varepsilon} \le H \le X^{1- \varepsilon}, 
		\]
        where $\Gamma(n)$ is the gamma function of $n$. 
	\end{Lem}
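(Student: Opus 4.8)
The plan is to reduce $S_\rho(X+H)-S_\rho(X)$ to an explicit main term plus oscillatory integrals, using the unique factorisation $f=a^2b^3$ ($b$ squarefree) of square-full numbers. Put $N_B(u)\coloneqq \#\{f\le u: f\in\mathscr{Q}_B\}=\sum_{b\le B}\mu(b)^2\lfloor(u/b^3)^{1/2}\rfloor$ and let $\psi_0(x)\coloneqq x-\lfloor x\rfloor-\tfrac12$; then $N_B(u)=M_Bu^{1/2}-\tfrac12Q_B^*-E_B(u)$, where $M_B\coloneqq\sum_{b\le B}\mu(b)^2b^{-3/2}=\zeta(\tfrac32)/\zeta(3)+O(B^{-1/2})$ by Lemma \ref{Lem_sum_mobius}, $Q_B^*\coloneqq\sum_{b\le B}\mu(b)^2\ll B$, and $E_B(u)\coloneqq\sum_{b\le B}\mu(b)^2\psi_0((u/b^3)^{1/2})$. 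Integration by parts in $\sum_{f\le X,\,f\in\mathscr{Q}_B}(Q-f)^\rho=\int_0^X(Q-u)^\rho\,dN_B(u)$ gives $S_\rho(Q)=\tfrac1\rho(Q-X)^\rho N_B(X)+\int_0^X N_B(u)(Q-u)^{\rho-1}\,du$, whence
\[
	S_\rho(X+H)-S_\rho(X)=\frac{H^\rho}{\rho}N_B(X)+M_B\,\mathcal I(u^{1/2})-\frac{Q_B^*}{2}\,\mathcal I(1)-\mathcal I(E_B),
\]
with $\mathcal I(g)\coloneqq\int_0^X g(u)\big((X+H-u)^{\rho-1}-(X-u)^{\rho-1}\big)\,du$.

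The first three pieces are routine. Since $N_B(X)\ll X^{1/2}$, the first is $\ll H^\beta X^{1/2}/|\gamma|\ll HX^{1/2}C^{-2}/|\gamma|$. The beta-integral identity $\int_0^Q(Q-u)^{\rho-1}u^{1/2}\,du=\Gamma(\rho)\Gamma(\tfrac32)\Gamma(\rho+\tfrac32)^{-1}Q^{\rho+1/2}$ together with the tail estimate $\int_X^{X+H}(X+H-u)^{\rho-1}u^{1/2}\,du\ll X^{1/2}H^\beta/|\gamma|$ (one integration by parts) turns $M_B\,\mathcal I(u^{1/2})$ into the first displayed term of the Lemma, up to a bounded constant (using $\Gamma(\tfrac32)=\tfrac12\Gamma(\tfrac12)$ and $M_B\ll1$), plus an error $\ll X^{1/2}H^\beta/|\gamma|\ll HX^{1/2}C^{-2}/|\gamma|$. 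Likewise $\mathcal I(1)=\rho^{-1}\big((X+H)^\rho-X^\rho-H^\rho\big)$, and with $Q_B^*\ll B$ this yields the second displayed term plus $\ll BH^\beta/|\gamma|\ll HX^{1/2}C^{-2}/|\gamma|$.

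The heart of the matter is $\mathcal I(E_B)$. Substituting $w=X-u$ and expanding $\psi_0$ into a truncated Fourier series of a large degree $H_0$ (polynomial in $X$; the remainder contributes $\ll B$, absorbed below), one writes $\psi_0(b^{-3/2}(X-w)^{1/2})$ as $\sum_{0<|h|\le H_0}c_h\,e(hb^{-3/2}(X-w)^{1/2})$ with $|c_h|\ll|h|^{-1}$, and is reduced to bounding
\[
	\sum_{b\le B}\mu(b)^2\sum_{0<h\le H_0}\frac1h\,\Big|\int_0^X\big((w+H)^{\rho-1}-w^{\rho-1}\big)\,e\big(\pm hb^{-3/2}(X-w)^{1/2}\big)\,dw\Big|+B\log X,
\]
the $B\log X$ absorbing the contribution of $0\le w\le1$, where $w^{\rho-1}$ is unbounded. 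The gain is that $(w+H)^{\rho-1}-w^{\rho-1}$ is small, of size $\ll\min(w^{\beta-1},\,|\gamma|Hw^{\beta-2})$, so the cancellation inherent in $S_\rho(X+H)-S_\rho(X)$ replaces the naive $X^\beta$ by $H^\beta$; it is essential to keep this amplitude intact rather than split the difference into two pieces. One then decomposes $[1,X]$ dyadically — and, near $w=X$, also dyadically in $X-w$ — and on each block estimates the integral by the first–derivative test (Lemma \ref{Lem_18}) when the phase $\gamma\log w\pm2\pi hb^{-3/2}(X-w)^{1/2}$ has no stationary point, and by the second–derivative test (Lemma \ref{Lem_19}), or directly by Lemma \ref{Lem_23} with $n=hb^{-3/2}$, when it does. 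The decisive block is $w\asymp|\gamma|H$, which contributes $\ll H^\beta|\gamma|^{\beta-1/2}$; summing over the $O(\log X)$ blocks, over $h$ (for large $h$ the rapid–decay bound $Q^{1/2}/|n|$ of Lemma \ref{Lem_23} makes $\sum_h h^{-1}(\cdots)$ converge), and over $b$ produces $H^\beta|\gamma|^{\beta-1/2}(\log X)^2$, the remaining endpoint terms being $\ll B\log X+HX^{1/2}C^{-2}/|\gamma|$.

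The main obstacle is this last step. One must genuinely exploit the cancellation in the difference $S_\rho(X+H)-S_\rho(X)$ — keeping the amplitude $(w+H)^{\rho-1}-w^{\rho-1}$ together rather than splitting it — and then carry out a careful dyadic balancing of the four quantities $H$, $|\gamma|$, $h$, $b$ so that the critical block yields precisely the exponent $H^\beta|\gamma|^{\beta-1/2}$ (rather than the weaker $H^\beta|\gamma|^{1/2}$ or $X^\beta|\gamma|^{-1/2}$ coming from a cruder split), while the summations over $h$ and $b$ cost only logarithmic, respectively bounded, factors.
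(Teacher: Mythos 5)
Your proposal is correct and follows essentially the same route as the paper: both reduce $S_\rho(Q)$ to a Stieltjes integral against $[u^{1/2}]$ (your $N_B$), split off $u^{1/2}$ and $-\tfrac12$ to get the Beta-integral main term and the $Q^\rho/(2\rho)$ piece, Fourier-expand the sawtooth, and bound the resulting oscillatory integrals via Lemma~\ref{Lem_23} with the crucial threshold at $u\asymp H|\gamma|$. The only cosmetic differences are that the paper works $b$-by-$b$ rather than aggregating into $N_B$, and it extracts the smallness of the amplitude by a Taylor expansion of $(u+H)^{\rho-1}$ on $[U,X]$ with $U=\min(4H|\gamma|,X)$ (splitting the difference into two pieces only on $[1,U]$, where each is individually acceptable), instead of your direct bound $\ll\min(w^{\beta-1},|\gamma|Hw^{\beta-2})$ with dyadic blocks.
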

	\begin{proof}
		For $X \le Q \le X+H$,
		\begin{align}
			S_{\rho}(Q)
			&= \frac{1}{\rho} \sum_{b \le B } \mu(b)^{2} \sum_{a^{2} \le \frac{X}{b^{3}}} (Q-a^{2}b^{3})^{\rho} \notag \\
			&= \frac{1}{\rho} \sum_{b \le B } \mu(b)^{2}  \int_{0}^{\frac{X}{b^{3}}} (Q-b^{3}u)^{\rho} d[u^{\frac{1}{2}}] \notag \\
			&=  \sum_{b \le B } \mu(b)^{2} \bigg( \frac{1}{2\rho} \int_{0}^{\frac{X}{b^{3}}} (Q-b^{3}u)^{\rho} u^{-\frac{1}{2}} du
			+ \frac{1}{\rho}  \int_{0}^{\frac{X}{b^{3}}} (Q-b^{3}u)^{\rho} d \bigg( \{ u^{\frac{1}{2}} \}- \frac{1}{2} \bigg) \bigg). \label{S_rho(Q)} 
		\end{align}
		
		First, we consider the first term on the most right-hand side of (\ref{S_rho(Q)}). 
		Changing the variables and property of the beta function, $B(x, y) = \int_{0}^{1} t^{x-1} (1-t)^{y-1} \, dt$, gives 
		\begin{align*}
			& \frac{1}{2\rho} \int_{0}^{\frac{X}{b^{3}}} (Q-b^{3}u)^{\rho} u^{-\frac{1}{2}} du \\
			=& \frac{1}{2\rho} b^{-\frac{3}{2}} \int_{0}^{Q} (Q-u)^{\rho} u^{-\frac{1}{2}} du 
			+ \frac{1}{2\rho} b^{-\frac{3}{2}} \int_{Q}^{X} (Q-u)^{\rho} u^{-\frac{1}{2}} du \\
			=& \frac{1}{2} \frac{\Gamma(\rho) \Gamma(\frac{1}{2})}{\Gamma(\rho +\frac{3}{2})} b^{-\frac{3}{2}} Q^{\rho +\frac{1}{2}} 
			+ O \bigg( b^{-\frac{3}{2}} \frac{H^{2}X^{-\frac{1}{2}}}{|\gamma|} \bigg) \\
			=& \frac{1}{2} \frac{\Gamma(\rho) \Gamma(\frac{1}{2})}{\Gamma(\rho +\frac{3}{2})} b^{-\frac{3}{2}} Q^{\rho +\frac{1}{2}} 
			+ O \bigg( b^{-\frac{3}{2}} \frac{HX^{\frac{1}{2}}B^{-1}C^{-2}}{|\gamma|} \bigg). 
		\end{align*}
		
		Next, we consider the second term on the most-right-hand side of (\ref{S_rho(Q)}).
		By integration by parts, we have 
		\begin{align*}
			& \frac{1}{\rho} \int_{0}^{\frac{X}{b^{3}}} (Q-b^{3}u)^{\rho} d \bigg( \{ u^{\frac{1}{2}} \}- \frac{1}{2} \bigg) \\
			=& -b^{3} \int_{0}^{\frac{X}{b^{3}}} (Q-b^{3}u)^{\rho -1} \bigg( \{ u^{\frac{1}{2}} \}- \frac{1}{2} \bigg) du 
			- \frac{Q^{\rho}}{2\rho}
			+ O \bigg( \frac{H}{|\gamma|} \bigg) \\
			=& -b^{3} \int_{0}^{\frac{X}{b^{3}} -\frac{1}{b^{3}}} (Q-b^{3}u)^{\rho -1} \bigg( \{ u^{\frac{1}{2}} \}- \frac{1}{2} \bigg) du \\
			&-b^{3} \int_{\frac{X}{b^{3}} -\frac{1}{b^{3}}}^{\frac{X}{b^{3}}} (Q-b^{3}u)^{\rho -1} \bigg( \{ u^{\frac{1}{2}} \}- \frac{1}{2} \bigg) du 
			- \frac{Q^{\rho}}{2\rho}
			+ O \bigg( \frac{HX^{\frac{1}{2}}B^{-1}C^{-2}}{|\gamma|} \bigg). 
		\end{align*}
		The second integral of most-right-hand side of above is
		\[
			-b^{3} \int_{\frac{X}{b^{3}} -\frac{1}{b^{3}}}^{\frac{X}{b^{3}}} (Q-b^{3}u)^{\rho -1} \bigg( \{ u^{\frac{1}{2}} \}- \frac{1}{2} \bigg) du
			\ll \frac{1}{\beta}.
		\]
		From Lemma \ref{Lem_zerofree}, we have $\frac{1}{\beta} \ll \log X$.
		Then,
		\begin{align*}
		& \frac{1}{\rho} \int_{0}^{\frac{X}{b^{3}}} (Q-b^{3}u)^{\rho} d \bigg( \{ u^{\frac{1}{2}} \}- \frac{1}{2} \bigg) \\
		=& -b^{3} \int_{0}^{\frac{X}{b^{3}} -\frac{1}{b^{3}}} (Q-b^{3}u)^{\rho -1} \bigg( \{ u^{\frac{1}{2}} \}- \frac{1}{2} \bigg) du 
		- \frac{Q^{\rho}}{2\rho}
		+ O \bigg( \frac{HX^{\frac{1}{2}}B^{-1}C^{-2}}{|\gamma|} + \log X \bigg). 
		\end{align*}
		By the Fourier expansion:
		\[
			\{ u \} - \frac{1}{2}
			= - \sum_{n \ne 0} \frac{e(nu)}{2\pi in}, 
		\]
		we have
		\[
			\frac{1}{\rho} \int_{0}^{\frac{X}{b^{3}}} (Q-b^{3}u)^{\rho} d \bigg( \{ u^{\frac{1}{2}} \}- \frac{1}{2} \bigg) 
			=- \frac{Q^{\rho}}{2\rho}
			+ b^{3} R_{\rho}(Q)
			+ O \bigg( \frac{HX^{\frac{1}{2}}B^{-1}C^{-2}}{|\gamma|} + \log X \bigg), 
		\]
		where
		\[
			I_{\rho}(Q, n)
			\coloneqq I _{\rho , k, l}(Q, n)
			= \int_{0}^{\frac{X}{b^{3}} -\frac{1}{b^{3}}} (Q-b^{3}u)^{\rho-1} e(nu^{\frac{1}{2}}) du , 
		\]
		\[
			R_{\rho}(Q) 
			\coloneqq R_{\rho , k, l}(Q) 
			= \sum_{n \ne 0} \frac{I_{\rho}(Q, n)}{2\pi in}. 
		\]
        
		We estimate $I_{\rho}(X+H, n) - I_{\rho}(X, n)$ to estimate $R_{\rho}(X+H) - R_{\rho}(X)$.
		Changing the variables gives
		\begin{align*}
			&I_{\rho}(X+H, n)
			= b^{-3} \int_{1}^{X} (u+H)^{\rho-1} e(b^{-\frac{3}{2}}n(X-u)^{\frac{1}{2}}) du , \\
			&I_{\rho}(X, n)
			= b^{-3} \int_{1}^{X} u^{\rho-1} e(b^{-\frac{3}{2}}n(X-u)^{\frac{1}{2}}) du . 
		\end{align*}
		Let $U = \min(4H|\gamma|, X)$. 
		We write
		\[
			I_{\rho}(X+H, n) - I_{\rho}(X, n)
			=b^{-3} (I + I_{1} +I_{2}),
		\]
		where
				\begin{align*}
			&I 
			\coloneqq \int_{U}^{X} ((u+H)^{\rho-1} - u^{\rho-1}) e(b^{-\frac{3}{2}}n(X-u)^{\frac{1}{2}}) du , \\
			&I_{1} 
			\coloneqq \int_{1}^{U} (u+H)^{\rho-1} e(b^{-\frac{3}{2}}n(X-u)^{\frac{1}{2}}) du , \\
			&I_{2} 
			\coloneqq \int_{1}^{U} u^{\rho-1} e(b^{-\frac{3}{2}}n(X-u)^{\frac{1}{2}}) du . 
		\end{align*}
		
		For $I$, by the Taylor expansion: 
		\[
			(u+H)^{\rho -1} 
			= u^{\rho -1} 
			+ \sum_{\nu =1}^{\infty} \binom{\rho -1}{\nu} H^{\nu}, 
		\]
		therefore we have 
		\[
			I 
			\ll \sum_{\nu =1}^{\infty} \binom{\rho -1}{\nu} H^{\nu} \int_{U}^{X} u^{\rho - \nu -1} e \bigg( n(X-u)^{\frac{1}{2}} \bigg) du.
		\]
		In the case of $4H|\gamma| \le X$, by using Lemma \ref{Lem_23} and $U=4H|\gamma|$,  
		\[
			I 
			\ll \frac{U^{\beta} \log X}{|\gamma|^{\frac{1}{2}}} \sum_{\nu =1}^{\infty} \bigg| \binom{\rho -1}{\nu} \bigg| \bigg( \frac{H}{U} \bigg)^{\nu} 
			\ll \frac{U^{\beta} \log X}{|\gamma|^{\frac{1}{2}}} \sum_{\nu =1}^{\infty} \prod_{\mu =1}^{\nu} \frac{|\gamma|+ 2\mu}{4\mu |\gamma|}
			\ll \frac{U^{\beta} \log X}{|\gamma|^{\frac{1}{2}}} . 
		\]
		In the case of $4H|\gamma| \ge X$, $I$ is an empty integral, then we have the same estimate. 
		
		For $I_{1}$ by chang- ing the variable and Lemma \ref{Lem_23}, 
		\[
			I_{1}
			\ll \int_{1+H}^{U+H} u^{\beta +i\gamma -1} \exp(n(X+H-u)^{\frac{1}{2}}) du 
			\ll \frac{U^{\beta} \log X}{|\gamma|^{\frac{1}{2}}} . 
		\]
		Similary, 
		\[
			I_{2}
			\ll \frac{U^{\beta} \log X}{|\gamma|^{\frac{1}{2}}} . 
		\]
		
		Then, we have
		\[
			I_{\rho}(X+H, n) - I_{\rho}(X, n) 
			= b^{-3} \frac{U^{\beta} \log X}{|\gamma|^{\frac{1}{2}}} 
			\ll b^{-3} H^{\beta} |\gamma|^{\beta - \frac{1}{2}} \log X.
		\]
		
		If $|n| > 2(X+H)^{\frac{1}{2}} |\gamma|$, from Lemma \ref{Lem_23}, 
		\[
			I_{\rho}(X+H, n) - I_{\rho}(X, n) 
			\ll \frac{X^{-\frac{1}{2}}}{|n|}. 
		\]
		
		Thus we have
		\begin{align*}
			&R_{\rho}(X+H) - R_{\rho}(X) \\
			= & \sum_{n \le 2(X+H)^{-\frac{1}{2}} |\gamma|} \hspace{-1em} \frac{ I_{\rho}(X+H, n) - I_{\rho}(X, n) }{2 \pi i n} 
			+ \sum_{n > 2(X+H)^{-\frac{1}{2}} |\gamma|} \hspace{-1em} \frac{ I_{\rho}(X+H, n) - I_{\rho}(X, n) }{2 \pi i n} \\
			\ll & b^{-3} \bigg( H^{\beta} |\gamma|^{\beta -\frac{1}{2}} \log X \sum_{n \le 2(X+H)^{-\frac{1}{2}} |\gamma|} \frac{1}{n} 
			+ X^{-\frac{1}{2}} \sum_{n > 2(X+H)^{-\frac{1}{2}} |\gamma|} \frac{1}{n^{2}} \bigg) \\
			\ll & b^{-3} (H^{\beta} |\gamma|^{\beta -\frac{1}{2}} (\log X)^{2} +1). 
			\end{align*}
			
			Therefore, we get 
			\begin{align*}
				&S_{\rho}(X+H) -S_{\rho}(X) \\
				\ll & \sum_{b \le B} \mu(b)^{2} \bigg( \frac{\Gamma(\rho) \Gamma(\frac{1}{2})}{\Gamma(\rho +\frac{3}{2})} b^{-\frac{3}{2}} \big( (X+H)^{\rho +\frac{1}{2}} - X^{\rho + \frac{1}{2}} \big)
			+ \frac{(X+H)^{\rho} -X^{\rho}}{2\rho} \\
			& \hspace{7em} + b^{-3} (H^{\beta} |\gamma|^{\beta -\frac{1}{2}} (\log X)^{2}) 
			+ \frac{HX^{\frac{1}{2}}B^{-1}C^{-2}}{|\gamma|}
			+ \log X \bigg)
		\end{align*}
		Lemma \ref{Lem_sum_mobius} leads to the lemma. 
	\end{proof}
	
	We finally prove Theorem \ref{main}. 
	From Lemmas \ref{Lem_21}, \ref{Lem_22} and \ref{Lem_24}, we have 
	\begin{align*}
		\sum_{X < N \le X+H} \widetilde{R}_{B}(N) 
		=& \sum_{\substack{f \le X \\ f \in \mathscr{Q}_{B}}} \bigg( \psi(X+H-f) - \psi(X-f) \bigg) +O(HX^{\frac{1}{2}}C^{-1}) \\
		=& S(X+H) - \sum_{\substack{\rho \\ | \gamma | \le T}} S_{\rho}(X+H) + O(X^{\frac{3}{2}}T^{-1} (\log X)^{2} ) \\
		&-S(X) + \sum_{\substack{\rho \\ | \gamma | \le T}} S_{\rho}(X) + O(X^{\frac{3}{2}}T^{-1} (\log X)^{2} ) +O(HX^{\frac{1}{2}}C^{-1}) \\
		=& M + O( | R_{1} | + R_{2} + (R_{3} + X^{\frac{3}{2}}T^{-1} +T)(\log X)^{2} +HX^{\frac{1}{2}}C^{-1} )
	\end{align*}
	provided
	\begin{equation} \label{condition1}
		X^{\varepsilon} \le H \le X^{1- \varepsilon}, 
		\quad
		2 \le T \le X, 
	\end{equation}
	where
	\[
		M \coloneqq S(X+H) - S(X), 
		\quad
		R_{1} \coloneqq \sum_{\substack{\rho \\ | \gamma | \le T}} \frac{\Gamma(\rho) \Gamma(\frac{1}{2})}{\Gamma(\rho +\frac{3}{2})} \bigg( (X+H)^{\rho +\frac{1}{2}} - X^{\rho + \frac{1}{2}} \bigg), 
	\]
	\[
		R_{2} \coloneqq \sum_{\substack{\rho \\ | \gamma | \le T}} \frac{(X+H)^{\rho} -X^{\rho}}{2\rho} B, 
		\quad
		R_{3} \coloneqq \sum_{\substack{\rho \\ | \gamma | \le T}} H^{\beta} |\gamma|^{\beta -\frac{1}{2}}. 
	\]
	
	In order to control the size of $X^{\frac{3}{2}}T^{-1} (\log X)^{2}$, put
	\begin{equation} \label{T}
		T = X^{1+\varepsilon_{1}}H^{-1}, 
		\quad
		0 < \varepsilon_{1} \le \frac{\varepsilon}{2}. 
	\end{equation}
	From (\ref{condition1}), we have
	\begin{equation} \label{T_condition}
		X^{\varepsilon} \le T \le X^{1- \frac{\varepsilon}{2}}.
	\end{equation}
	
	If $X^{\frac{1}{4} +\varepsilon} \le H$, 
	\[
		T(\log X)^{2}
		= HX^{1+ \varepsilon_{1}}H^{-2}(\log X)^{2}
		\ll HX^{\frac{1}{2}}C^{-1}. 
	\]
	Therefore we get 
	\[
		(X^{\frac{3}{2}}T^{-1} +T)(\log X)^{2}
		\ll HX^{\frac{1}{2}}C^{-1}. 
	\]
	
	For $M$, from Lemma \ref{Lem_7}, we have 
	\begin{equation} \label{M_result}
		M 
		= \frac{\zeta(\frac{3}{2})}{\zeta(3)} HX^{\frac{1}{2}} + O(HX^{\frac{1}{2}}B^{-1}). 
	\end{equation}
	
	We first consider $R_{1}$. From the fundamental theorem of calculus, 
	\[
		(X+H)^{\rho +\frac{1}{2}} - X^{\rho + \frac{1}{2}}
		 = \bigg( \rho +\frac{1}{2} \bigg) \int_{X}^{X+H} u^{\rho- \frac{1}{2}} du
		 \ll |\gamma| HX^{\beta -\frac{1}{2}}. 
	\]
	By the Stirling's formula and dissecting dyadically, 
	\begin{align}
		R_{1} 
		\ll HX^{-\frac{1}{2}} \sum_{\substack{\rho \\ | \gamma | \le T}} \frac{X^{\beta}}{|\gamma|^{\frac{1}{2}}} 
		&\ll HX^{-\frac{1}{2}} \sum_{1 \le K \le T} K^{-\frac{1}{2}} \sum_{\substack{\rho \\ K < | \gamma | \le 2K}} X^{\beta} \notag \\
		&\ll  HX^{-\frac{1}{2}} \log X \sup_{1 \le K \le T} K^{-\frac{1}{2}} \sum_{\substack{\rho \\ K < | \gamma | \le 2K}} X^{\beta}. \label{R1}
	\end{align}
	For $1 \le K \le T$, we denote 
	\[
		K=X^{\delta}, 
		\quad
		XH^{-1} = X^{\Delta}. 
	\]
	From (\ref{T}), 
	\begin{equation} \label{delta_condition}
		0 \le \delta \le \varepsilon_{1} + \Delta.
	\end{equation}
	By using Lemma \ref{Lem_13}, we have
	\[
		 K^{-\frac{1}{2}} \sum_{\substack{\rho \\ K < | \gamma | \le 2K}} X^{\beta}
		 \ll ( X^{\phi(\delta) -\frac{1}{2}\delta} + X^{1 -\eta +(2\eta -\frac{1}{2}) \delta}) (\log X)^{A}. 
	\]
	From Lemma \ref{Lem_14}, for sufficiently large $X$, we have 
	\[
		\frac{d}{d\delta} (\phi(\delta) -\frac{1}{2}\delta ) >0, 
		\quad
		2\eta -\frac{1}{2} <0. 
	\]
	Then, by (\ref{delta_condition}) and the mean value theorem, we get 
	\begin{align*}
		K^{-\frac{1}{2}} \sum_{\substack{\rho \\ K < | \gamma | \le 2K}} X^{\beta}
		 &\ll ( X^{\phi(\Delta +\varepsilon_{1}) -\frac{1}{2}\Delta +\varepsilon_{1}} + X^{1 -\eta}) (\log X)^{A} \\
		 &\ll ( X^{\phi(\Delta) -\frac{1}{2}\Delta +\varepsilon_{1}} + X^{1 -\eta}) (\log X)^{A}. 
	\end{align*}
	By substituting the above into (\ref{R1}),  we have 
	\begin{equation} \label{R1_result}
		R_{1} 
		\ll HX^{-\frac{1}{2} + \phi(\Delta) -\frac{1}{2}\Delta +\varepsilon_{1}} (\log X)^{A} + HX^{\frac{1}{2}}C^{-1}. 
	\end{equation}
	
	Next, we consider $R_{2}$. From the fundamental theorem in calculus,
	\[
		R_{2} 
		\ll \sum_{\substack{\rho \\ K < | \gamma | \le 2K}} \int_{X}^{X+H} u^{\beta -1} 
		\ll HX^{-1} \sum_{\substack{\rho \\ K < | \gamma | \le 2K}} X^{\beta} . 
	\]
	For $|\gamma| \le T \le X$, we have $1 \le \frac{X}{|\gamma|}$ and
	\begin{equation} \label{R2_result}
		R_{2}
		\ll HX^{-\frac{1}{2}} \sum_{\substack{\rho \\ | \gamma | \le T}} \frac{X^{\beta}}{|\gamma|^{\frac{1}{2}}} 
		\ll HX^{-\frac{1}{2} + \phi(\Delta) -\frac{1}{2}\Delta +\varepsilon_{1}} (\log X)^{A} + HX^{\frac{1}{2}}C^{-1}. 
	\end{equation}
	
	Finally we consider $R_{3}$. By dissecting dyadically, 
	\[
		R_{3} 
		\ll \log X \sup_{1 \le K \le T} K^{-\frac{1}{2}} \sum_{\substack{\rho \\ K < | \gamma | \le 2K}} (HK)^{\beta}. 
	\]
	Similarly to $R_{1}$, we denote $K=X^{\delta}, XH^{-1} = X^{\Delta}$. 
	From (\ref{T_condition}), 
	\[
		X^{\Delta +\varepsilon_{1}} 
		= X^{1+ \varepsilon_{1}} H^{-1} 
		=T 
		\le X. 
	\]
	Thus, we have
	\begin{equation} \label{Delta_condition}
		0 \le \Delta \le 1 -\varepsilon_{1}. 
	\end{equation}
	We define
	\[
		\lambda 
		= \lambda (\delta) 
		= \frac{\log K}{\log HK} 
		= \frac{\delta}{1 -\Delta + \delta}.
	\]
	From the above, $\lambda$ is increasing in (\ref{Delta_condition}). 
	By Lemma \ref{Lem_13}, we have 
	\[
		K^{-\frac{1}{2}} \sum_{\substack{\rho \\ K < | \gamma | \le 2K}} (HK)^{\beta} 
		\ll \bigg( (HK)^{\phi(\lambda) -\frac{1}{2} \lambda} + (HK)^{1 -\eta + (2\eta -\frac{1}{2}) \lambda} \bigg) (\log X)^{A} . 
	\]
	Since $HK = X(XH^{-1})^{-1}K = X^{1 -\Delta +\delta}$, we have
	\begin{align}
		&K^{-\frac{1}{2}} \sum_{\substack{\rho \\ K < | \gamma | \le 2K}} (HK)^{\beta} \notag\\
		\ll & \bigg( X^{(1 -\Delta +\delta)(\phi(\lambda) -\frac{1}{2} \lambda)} + X^{(1 -\Delta +\delta)(1 -\eta + (2\eta -\frac{1}{2}) \lambda)} \bigg) (\log X)^{A} . \label{R3}
	\end{align}
	 For the frist trem of (\ref{R3}), $(1 -\Delta +\delta)$ and $(\phi(\lambda) -\frac{1}{2})$ are increasing with respect to $\delta$, therefore we have 
	\begin{align*}
		X^{(1 -\Delta +\delta)(\phi(\lambda) -\frac{1}{2} \lambda)}
		&\ll X^{(1 +\varepsilon_{1})(\phi(\lambda (\Delta +\varepsilon_{1})) -\frac{1}{2} \lambda(\Delta +\varepsilon_{1}))} \\
		&\ll X^{\phi(\lambda (\Delta +\varepsilon_{1})) -\frac{1}{2} \lambda(\Delta +\varepsilon_{1}) +\varepsilon_{1}} .
	\end{align*}
	By $\lambda '(\delta) \le 1$, Lemma \ref{Lem_14} and the mean value theorem give  
	\[
		\phi(\lambda (\Delta +\varepsilon_{1})) -\frac{1}{2} \lambda(\Delta +\varepsilon_{1})
		\le \phi(\lambda (\Delta)) -\frac{1}{2} \lambda(\Delta) +\varepsilon_{1}
		\le \phi(\Delta) -\frac{1}{2} \Delta +\varepsilon_{1}.
	\]
	For the second term of (\ref{R3}), we have 
	\[
		(1 -\Delta +\delta)(1 -\eta + (2\eta -\frac{1}{2}) \lambda)
		\ll (1 -\Delta)(1 -\eta) + (\frac{1}{2} +\eta)(\Delta +\varepsilon_{1}). 
	\]
	Thus we get 
	\[
		X^{(1 -\Delta +\delta)(1 -\eta + (2\eta -\frac{1}{2}) \lambda)}
		\ll H^{\frac{1}{2}} X^{\frac{1}{2} -\eta + (\frac{1}{2} +\eta) \varepsilon_{1}}
		\ll H^{\frac{1}{2}} X^{\frac{1}{2} +2\varepsilon_{1}}. 
	\]
	Then, 
	\begin{equation} \label{R3_result}
		R_{3} 
		\ll X^{\phi(\Delta) -\frac{1}{2} + 3\varepsilon_{1}} +  H^{\frac{1}{2}} X^{\frac{1}{2} +2\varepsilon_{1}}. 
	\end{equation}
	
	From (\ref{M_result}), (\ref{R1_result}), (\ref{R2_result}) and (\ref{R3_result}), we have
	\[
		\sum_{X < N \le X+H} \widetilde{R}_{B}(N) 
			= \frac{\zeta(\frac{3}{2})}{\zeta(3)}HX^{\frac{1}{2}} + O ( HX^{\frac{1}{2}}C^{-1} + E)
	\]
	provided
	\[
		X^{\frac{1}{4} \varepsilon} \le H \le X^{1 -\varepsilon} 
	\]
	and $\varepsilon_{1} \le \frac{\varepsilon}{16}$, 
	where
	\[
		E = HX^{-\frac{1}{2} +\phi(\Delta) \frac{1}{2} \Delta +4\varepsilon_{4}} + X^{\phi(\Delta) -\frac{1}{2} \Delta + 4\varepsilon_{1}}, 
		\quad
		XH^{-1} = X^{\Delta}. 
	\]
	If $X^{\frac{32-4\sqrt{15}}{49}+\varepsilon} \le H$, we have
	\[
		0 \le \Delta \le \frac{17+4\sqrt{15}}{49}.
	\]
	Then, from Lemma \ref{Lem_16}, we have 
	\[
		E 
		\ll HX^{\frac{1}{2} -\frac{\varepsilon}{10} +4\varepsilon_{1}}. 
	\]
	By taking $\varepsilon_{1} = \frac{\varepsilon}{80}$, we have the asymptotic formula (\ref{result}) provide
	\[
		X^{\frac{32-4\sqrt{15}}{49}+\varepsilon} \le H \le X^{1- \varepsilon}. 
	\]
	Hence, Theorem \ref{main} follows. 

%-------------------------------------------------Acknowledgements--------------------------------------------------
\subsection*{Acknowledgements}
	The author would like to thank Prof.~Maki Nakasuji for her thoughtful guidance and helpful advice. 
	The author also would like to thank Prof.~Yuta Suzuki for his constructive suggestions. 

%%%%%%%%%%%%%%%%%%%%%%%%%%%%%%%%%%%%%%%%%%%%%%%%%%%%%5
%-----------------------------------------------References------------------------------------------------

\vspace{1em}

\begin{flushleft}
{\textsc{%
\small
Fumi Ogihara\\[.3em]
\footnotesize
Graduate School of Science and Technology, \\
Sophia University, \\
7-1 Kioicho, Chiyoda-ku, Tokyo 102-8554, Japan
}

\small
\textit{Email address}: \texttt{f-ogihara-8o8@eagle.sophia.ac.jp}
}
\end{flushleft}

\vspace{1em}

\end{document}